\title{A lower bound on the minimum weight \\
	of some geometric codes}
\author{Bence Csajb\'ok\thanks{This paper was supported by the János Bolyai Research Scholarship of the Hungarian Academy of Sciences and partially by the ELTE TKP 2021-NKTA-62 funding scheme.}
	, Giovanni Longobardi, Giuseppe Marino, Rocco Trombetti}
\date{}
\newcommand{\cC}{{\mathcal C}}
\newcommand{\cM}{{\mathcal M}}
\newcommand{\cH}{{\mathcal H}}
\newcommand{\cD}{{\mathcal D}}
\newcommand{\cS}{{\mathcal S}}
\newcommand{\F}{{\mathbb F}}
\newtheorem{theorem}{Theorem}[section]
\newtheorem{lemma}[theorem]{Lemma}
\newtheorem{corollary}[theorem]{Corollary}
\newtheorem{definition}[theorem]{Definition}
\newtheorem{proposition}[theorem]{Proposition}
\newtheorem{result}[theorem]{Result}
\newtheorem{example}[theorem]{Example}
\DeclareMathOperator{\PG}{{PG}}
\DeclareMathOperator{\AG}{{AG}}
\begin{document}
	\maketitle
	
	\begin{abstract}
		
		The $p$-ary code associated with the incidence structure of points and $t$-spaces in a projective space $\PG(m,q)$, where $q=p^h$, is the $\F_p$-subspace generated by the incidence vectors of the blocks of this design. The dual of this code consists of all vectors orthogonal to every codeword of the original code. 
		In contrast to the codes derived from point–subspace incidences, the minimum weight of the corresponding dual codes is generally unknown, which makes the problem more challenging.

		
		In 2008 Lavrauw, Storme and Van de Voorde proved the following reduction: the minimum weight of the dual of the code derived from point and $t$-space incidences in $\PG(m,q)$ is the same as the minimum weight of the dual of the code derived from point and line incidences in $\PG(m-t+1,q)$. After a series of works by Delsarte (1970), Assmus and Key (1992), Calkin, Key and De Resmini (1999), the best known lower bound for the case of point-line incidences was established in [B. Bagchi and P. Inamdar:  Projective geometric codes, J.\ Combin.\ Theory Ser.\ A, 99(1) (2002), 128--142].
		
		The problem of determining the minimum weight of these codes admits a natural geometric interpretation in terms of multisets of points in a projective space which meet each line in $0$ modulo $p$ points. 
		In this paper, by adopting this geometrical perspective and exploiting certain polynomial techniques from [S. Ball, A. Blokhuis, A. G\'acs, P. Sziklai, Zs. Weiner: On linear codes whose weights and length have a common divisor, Adv. Math., 211 (2007), 94--104], we prove a substantial improvement of the Bagchi--Inamdar bound in the case where $h>1$ and $m, p >2$.

	\end{abstract}
	
	
	
	\section{Introduction}
	\label{intro}
	
	Let $\Sigma$ be either the projective space $\PG(m,q)$, or the affine space $\AG(m,q)$, where $q=p^h$ for some prime $p$ and integer $h \geq 1$.
	Let $\cD_{\Sigma}(m,q)$ be the design of points and lines of $\Sigma$, i.e. the $2-(v, k, 1)$ design with 
	\[v = \frac{q^{m+1}-1}{q-1},\, k=q+1,\] 
	or \[v = q^m,\, k=q,\] respectively. 
	The $p$-ary code $\cC_{\Sigma}(m,q)$ associated with $\cD_{\Sigma}(m,q)$ is the $\F_p$-subspace generated by the incidence vectors of the blocks of the corresponding design. The \textit{dual} $\cC^\perp$ of a code $\cC$ is the $\F_p$-subspace of vectors orthogonal to all vectors of $\cC$ (under the standard inner product).
	The \textit{weight} $w({\bf v})$ of a vector ${\bf v}\in \F_q^v$ is the number of its non-zero coordinates w.r.t.\ the canonical basis of $\F_q^v$. The \textit{minimum weight} of a code is the minimum of the weights of its non-zero elements. 
	By Assmus and Key \cite[Theorem 5.7.9]{AssmusKey}, Calkin, Key and De Resmini \cite[Proposition 1]{DeResmini} and Delsarte \cite{Delsarte}, the lower bound 
	\begin{equation}
		\label{Del}
		(q+p)q^{m-2}
	\end{equation}
	was known for the minimum weight of both $\cC_{\AG}(m,q)^\perp$ and $\cC_{\PG}(m,q)^\perp$. This bound was improved in 2002 by Bagchi and Inamdar, see \cite[Theorem 3]{BagchiInamdar}, to
	\[2\left (\frac{q^{m}-1}{q-1}\left (1-\frac{1}{p} \right )+\frac{1}{p} \right ).\]

	By \cite[Theorems 10 and 11]{LaStoVan}, the minimum weight codewords of the dual of the code arising from incidences between points and $k$-spaces of $\PG(m,q)$ are the same as the minimum weight codewords of $\cC_{\PG}(m-k+1,q)^\perp$. 
	In \cite[Corollary 1.5]{deboeckvan} De Boeck and Van de Voorde  increased the lower bound on the minimum weight of $\cC_{\PG}(2,p^2)^\perp$, $p \geq 5$ prime, from $2q-2p+2$ to $2q-2p+5$.
	
	If $q$ is even, the lower bound in $\eqref{Del}$ is sharp; see \cite[Corollary 1]{DeResmini} by Calkin, Key and De Resmini.
	In fact, if $q$ is even, then investigating the minimum weight of $\mathcal{C}_{\Sigma}(m, q)^\perp$ is equivalent to finding the minimum size of a non-empty point set $S$ of $\Sigma$ such that each line meets $S$ in an even number of points. These are the so called sets of $\textit{even type}$. The smallest sets of even type were characterised for $q=2$ in \cite[Proposition 3]{BagchiInamdar} and for $q\in \{4,8\}$ in \cite{adriaensen}. 
	
	In the following, we recall the geometrical link between the codewords of $\cC_{\Sigma}(m,q)^\perp$ and certain multisets of $\Sigma$. A \textit{multiset} $\cM$ of $\Sigma$ is a pair $(\cS, \mu)$ where
	\begin{itemize}
		\item [(1)] $\cS$ a non empty set of points of $\Sigma$, and 
		\item [(2)] $\mu \colon \cS \rightarrow \mathbb{Z}^+$ a function that assigns a positive multiplicity to each element of $\cS$.
	\end{itemize}
	If $T$ is a subset of $\Sigma$,  then $|T\cap \cM| = \sum_{x\in T\cap \cS} \mu(x)$. In particular, $|\cM|=\sum_{x\in \cS}\mu(x)$. 
	
	Let $\mathcal{P}=\{x_1,x_2,\ldots,x_v\}$ be the point set of $\Sigma$, and consider $\cM=(\cS,\mu)$, a multiset of $\Sigma$. The vector ${\bf s}=(s_1,\ldots,s_v)$, such that
	\begin{equation}
		s_i=
		\begin{cases}
			\mu(x_i) & \textnormal{if} \,\, x_i \in \cS, \\
			0 & \textnormal{otherwise},
		\end{cases}
		\quad i=1,2,\ldots v,
	\end{equation}
	is called the \textit{characteristic vector} of $\cM$. Let $\sigma({\bf s})$  be the sum of the coordinates of the characteristic vector ${\bf s}$, then $\sigma({ \bf s} ) = \vert \cM \vert$ and if $\mu(x)=1$ for all $x \in \cS$, then $\cM$ is an ordinary set. Moreover, for an ordinary set, the weight $w({\bf s})$ of its characteristic vector ${\bf s}$  is the same as the sum $\sigma({\bf s})$ of the coordinates of ${\bf s}$.\\
	Let $\cS$ denote a non-empty point set of $\PG(2,q)$ meeting  each line of the plane in $0 \bmod r$ points. It follows from easy counting arguments that $r$ has to be a power of $p$ and hence lines meet $\cS$ in $0 \bmod p$ points. It follows that the characteristic vector ${\bf s}$ of $\cS$ is an element of $\cC_{\PG}(2,q)^\perp$. In \cite[Theorem 2.1]{5nomi},  Ball \textit{et al.} proved a lower bound on the size of such point sets. In fact, they proved something more general: a lower bound for the size of non-empty point sets meeting each line in $0 \bmod p^s$ points. What we will cite here is their result in the $s=1$ case, in terms of characteristic vectors. 
	
	\begin{result}\label{result}
		Suppose ${\bf s}\in \cC_{\PG}(2,q)^\perp$ and $q>p$. If ${\bf s}$ has only coordinates $0$ and $1$, then \begin{equation} w ({\bf s}) \geq (p-1)(q+p).
		\end{equation}
	\end{result}
	
	This bound is sharp when $q=p^2$, or when $p=2$, see \cite[Example 4.4]{5nomi}. Result \ref{result} was obtained by first proving it for ${\bf s} \in \cC_{\AG}(2,q)^\perp$. If $\cS$ is an ordinary point set of $\PG(2,q)$ of size less than $p(q+1)$ and meeting each line in $0 \bmod p$ points, then there always exists a line disjoint from $\cS$. The existence of such a line allows to transfer the problem from $\PG(2,q)$ to $\AG(2,q)$. We point out that for a multiset $\cM$ of $\PG(m,q)$, $m \geq 3$, we cannot see how a similar restriction on $|\cM|$ could ensure the existence of a hyperplane disjoint from $\cM$.
	
	From now on, a multiset of $\Sigma$ intersecting each line in $0 \bmod p$ points will be called a $(0 \bmod p)$-\textit{multiset}. Note that $\cM$ is a $(0 \bmod p)$-multiset if and only if its characteristic vector ${\bf s}=(s_1,s_2,\ldots,s_v)$ belongs to $\cC_{\Sigma}^\perp(m,q)$.
	In Section \ref{sec: main-result}, following the main ideas from \cite[Theorem 2.1]{5nomi} 
	we prove the following result.
	
	\begin{theorem}
		\label{codebound}
		Suppose ${\bf s}\in \cC_{\AG}(m,q)^\perp$ and $q> p>2$ with $m\geq 2$. If ${\bf s}$ has a coordinate equal to $1$, then
		\begin{equation}\label{affine_bound}
			\sigma({\bf s}) \geq    (p-1)q^{m-1}+pq^{m-2}.
		\end{equation}
	\end{theorem}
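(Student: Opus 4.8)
The plan is to combine an elementary count at a multiplicity‑one point with a multivariable version of the lacunary polynomial method of \cite{5nomi}. After a translation I may assume that some point $P$ of the support has $\mu(P)=1$ and is the origin of $\AG(m,q)$. Since every line meets $\cM$ in $0\bmod p$ points and an affine subspace of dimension at least $2$ is a disjoint union of parallel lines, every such subspace — in particular every hyperplane — also meets $\cM$ in $0\bmod p$ points. Each of the $\theta_{m-1}=(q^m-1)/(q-1)$ lines through $P$ carries an intersection number $n_u\equiv 0\bmod p$ with $n_u\geq \mu(P)=1$, whence $n_u\geq p$ and
\[
\sigma(\mathbf s)=1+\sum_u (n_u-1)\geq 1+(p-1)\theta_{m-1}.
\]
For $m=2$ the right–hand side equals $(p-1)q+p$ and the theorem follows, so the whole difficulty lies in $m\geq 3$, where the target $f(m):=(p-1)q^{m-1}+pq^{m-2}=q^{m-2}\bigl((p-1)q+p\bigr)$ strictly exceeds this trivial bound; equivalently, one must exclude the near‑homogeneous situation in which almost every line through $P$ meets $\cM$ in the minimal $p$ points.

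To extract the missing factor I would encode the hypothesis algebraically. Writing the support as $\{(a_1,\dots,a_m)\}$ with multiplicities, I attach the Rédei‑type polynomial
\[
R(X,Y_1,\dots,Y_{m-1})=\prod_{(a_1,\dots,a_m)\in\cM}\bigl(X+a_1Y_1+\cdots+a_{m-1}Y_{m-1}-a_m\bigr),
\]
the product over points counted with multiplicity, so that $\deg_X R=\sigma(\mathbf s)$. For each $\mathbf y\in\F_q^{m-1}$ the root multiplicities of $R(X,\mathbf y)$ are the intersection numbers of $\cM$ with a parallel class of hyperplanes; as these are $0\bmod p$, every specialization $R(X,\mathbf y)$ is a $p$-th power in $\F_q[X]$. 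Hence each coefficient $[X^i]R$, viewed as a polynomial in $Y_1,\dots,Y_{m-1}$, vanishes on all of $\F_q^{m-1}$ whenever $p\nmid i$; since $\deg_Y [X^i]R\leq \sigma(\mathbf s)-i$, those with $i>\sigma(\mathbf s)-q$ have total degree below $q$ and therefore vanish identically. This is the higher‑dimensional form of the lacunarity of \cite[Theorem 2.1]{5nomi}: among the top $q$ coefficients of $R$ in $X$, only those of index divisible by $p$ survive. The hypothesis that a coordinate equals $1$ enters as a nondegeneracy condition — it forces the reduction of $\mathbf s$ modulo $p$ to be a nonzero codeword (ruling out, e.g., a single point of multiplicity $p$) and simultaneously supplies the anchor $P$ for the count above.

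From here I would induct on $m$. Each hyperplane section $\cM\cap H$ through $P$ is a $(0\bmod p)$-multiset of $H\cong\AG(m-1,q)$ still carrying the multiplicity‑one point $P$, so the inductive hypothesis gives $|\cM\cap H|\geq f(m-1)$. The task is to amplify this by a factor $q$, matching the multiplicativity $f(m)=q\,f(m-1)$ and the extremal cylinder $\cM_0\times\F_q^{m-2}$ obtained over a planar extremal multiset $\cM_0$; the amplification is meant to come from the lacunary conditions in the remaining slope variable of $R$, which record how the sections through $P$ fit together across the pencil. The step I expect to be the main obstacle is precisely making this line‑level information bite in dimension at least $3$: averaging the $(m-1)$-dimensional bound over the hyperplanes through $P$ only returns the trivial estimate $1+(p-1)\theta_{m-1}$, because every such subspace shares the whole pencil at $P$, and a naive projection collapses the multiplicity‑one point into a class of multiplicity $0\bmod p$. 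One therefore cannot avoid running the multivariable lacunary analysis to its conclusion and reconciling the hyperplane‑level information carried by $R$ with the genuinely line‑level inductive input, so as to pin the exact constant $pq^{m-2}$; concretely this amounts to showing that the total excess satisfies $\sum_u(n_u-p)\geq q^{m-2}-1-(p-1)\tfrac{q^{m-2}-1}{q-1}$.

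Finally, the arithmetic hypotheses are used exactly in this last step. The condition $q>p$, that is $h\geq 2$, opens the gap between the trivial bound $1+(p-1)\theta_{m-1}$ and the target $f(m)$ that the vanishing of the non‑$p$‑th‑power coefficients is meant to fill; and $p>2$ excludes the characteristic‑two regime in which the weaker bound \eqref{Del} is already sharp, so that the sharper lacunary conclusions of \cite{5nomi} remain available.
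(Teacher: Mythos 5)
There is a genuine gap: the proposal proves the theorem only for $m=2$ (where the trivial count through the multiplicity-one point already gives $1+(p-1)\frac{q^m-1}{q-1}=(p-1)q+p$, exactly as in \eqref{eq:trivial}), and for $m\geq 3$ it only sets up a framework and then explicitly defers the entire substance of the argument. You yourself write that ``the step I expect to be the main obstacle'' is making the line-level hypothesis bite in dimension at least $3$, and that one ``cannot avoid running the multivariable lacunary analysis to its conclusion'' — but that analysis is never run. The concrete inequality you reduce the problem to, $\sum_u(n_u-p)\geq q^{m-2}-1-(p-1)\tfrac{q^{m-2}-1}{q-1}$, is stated as a goal and not established. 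Worse, the framework you choose makes this hard to repair: the multivariable R\'edei polynomial $R(X,Y_1,\dots,Y_{m-1})$ records only intersection numbers with \emph{hyperplanes}, and ``every hyperplane meets $\cM$ in $0\bmod p$ points'' is a far weaker condition than the hypothesis on lines (it does not even rule out very small configurations once one leaves the plane). Likewise the proposed induction on $m$ via hyperplane sections through $P$ cannot work as described, since — as you observe — averaging the $(m-1)$-dimensional bound over the pencil at $P$ only reproduces the trivial estimate. So neither of the two ingredients you introduce is shown to produce the missing factor.

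The paper's proof avoids both difficulties by a different device: it identifies $\AG(m,q)$ with $\F_{q^m}$ and uses the two-variable polynomial $R(X,Y)=\prod_{b\in\cM}\bigl(X+(Y-b)^{q-1}\bigr)$, where $Y$ ranges over the whole field $\F_{q^m}$. Since $(y-b)^{q-1}$ is constant on each punctured line through $y$, the specialization $R(X,y)$ factors according to the \emph{line} intersection numbers at $y$, so the full strength of the $0\bmod p$ hypothesis on lines enters at every point at once; in particular for $y\in\cM$ of multiplicity $t$ one gets $R(X,y)=X^t\bigl(X^{(q^m-1)/(q-1)}+(-1)^{m-1}\bigr)^{p-t}g_y(X)^p$. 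From there the argument is closed and non-inductive: a differential identity relating $R$ and $\partial R/\partial Y$, writing $|\cM|=(p-1)(q^{m-1}+q^{m-2})+kp$ and assuming $kp<q^{m-2}$, a chain of relations forcing $(f\sigma_{kp})'=0$ with $f(Y)=\prod_{y\in\cM}(Y-y)$, hence $\sigma_{kp}\equiv 0$, contradicted by Lucas' theorem which shows the coefficient $\binom{|\cM|}{kp}=1$ survives in $\sigma_{kp}$. To complete your proof you would need to supply an analogue of this entire chain in your multivariable setting, together with a mechanism for importing the line-level information that your polynomial currently does not see.
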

	
	Clearly, if no coordinates in ${\bf s} \in \cC_{\AG}(m,q)^\perp$ equals $1$ and ${\bf s}\neq {\bf 0}$ then by \eqref{Del} it follows that
	\begin{equation}\label{no-1}
		\sigma({\bf s}) \geq 2q^{m-1}+2p q^{m-2}.
	\end{equation}

	In Section \ref{sec_examples}, we show that the lower bound stated in (\ref{affine_bound}) is sharp for any $q>p$ and $m \geq 2$, by showing a family of examples attaining it. When $p=2$, then our construction is the same as the recursive one presented by Calkin \textit{et al.} in \cite[Section 3]{DeResmini} starting from a translation hyperoval of $\PG(2,2^h)$. Finally, using Theorem \ref{codebound}, in Section \ref{sec:lowerbound} we prove a significant improvement of the Bagchi--Inamdar bound in the case  $h>1$ and $m, p > 2$, obtaining:
	\[2\left(q^{m-1}\left(1-\frac1p\right)+q^{m-2}\right).\]
	We conclude the article with the investigation of $\sigma({\bf s})$, with ${\bf s} \in \cC_{\PG}(m,q)^\perp$, in characteristics $3$ and $5$.

	
	

\section{\texorpdfstring{On the size of an affine $(0 \bmod p)$-multiset}{On the size of an affine (0 mod p)-multiset}} \label{sec: main-result}

Let $\cM=(\cS,\mu)$ be a multiset of $\Sigma$. Since we are interested in a lower bound on the size  of multisets meeting lines in $0 \bmod p$ points, from now on we assume that the multiplicities of the points of $\cS$ are between $1$ and $p-1$. In fact, for each point $z \in \cS$, put $\mu^*(z)= \mu(z) \bmod p$ (the residue of $\mu(z)$ modulo $p$), and let $\cS^*$ denote the points of $\cS$ with multiplicity not divisible by $p$. Then $\mathcal{M}^*=(\cS^*,\mu^*)$ is a multiset of size (possibly) smaller than $\cM$ and intersecting each line of $\Sigma$ in  $0 \bmod p$ points. Let us suppose that there exists at least one point $x \in \cS$ such that $\mu(x)=1$, then by counting the lines of $\Sigma$ through $x$, we have 
\begin{equation}\label{eq:trivial}
	\vert \mathcal{M} \vert \geq 1+ (p-1) \frac{q^m-1}{q-1} .
\end{equation}

In terms of multisets, Theorem \ref{codebound} from Section \ref{intro} can be formulated as follows. 


\begin{theorem}
	\label{th:lowerbound}
	Let $\cM=(\cS,\mu)$ be a non-empty $(0 \bmod p)$-type multiset of points in $\AG(m,q)$, $m\geq 2$, $q=p^h$ with $p>2$ and $h>1$. If there is at least one point $x\in \cS$ with $\mu(x)=1$, then 
	\begin{equation}
		\label{lowerbound}
		|\cM|\geq (p-1)(q^{m-1}+q^{m-2})+q^{m-2}. 
	\end{equation}
\end{theorem}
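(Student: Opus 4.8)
The plan is to adapt the lacunary--polynomial argument of \cite{5nomi} from the projective plane to the affine space $\AG(m,q)$. First I would normalise, as in the paragraph preceding the statement: we may assume every multiplicity lies in $\{1,\dots,p-1\}$ and we may place the distinguished point $x$ with $\mu(x)=1$ at the origin $O$ of $\mathbb{F}_q^m$. It is worth isolating the base case $m=2$, where the target $(p-1)q^{m-1}+pq^{m-2}=(p-1)q+p$ is \emph{exactly} the elementary count \eqref{eq:trivial}: each of the $q+1$ lines through $O$ meets $\mathcal{M}$ in $0\bmod p$ points, hence carries weight $\equiv -1\pmod p$, i.e.\ at least $p-1$, off $O$. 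Thus the genuine content is $m\ge 3$, where \eqref{eq:trivial} must be improved by the positive amount $(q^{m-2}-1)\frac{q-p}{q-1}$. A first attempt would be induction on $m$, averaging the $(m-1)$-dimensional bound over the $\frac{q^m-1}{q-1}$ hyperplanes through $O$; this reproduces both leading terms $(p-1)q^{m-1}+pq^{m-2}$ but with an additive deficit of order $q$, so a genuinely algebraic input is needed to close the gap, and in particular to see that the non-trivial improvement comes from the lines \emph{not} through $O$.

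The algebraic engine is the following reformulation. I would attach to $\mathcal{M}$ a Rédei-type polynomial $R(S,T)=\prod_{P}(T+a_P S-b_P)^{\mu_P}$, obtained from a pencil/projection adapted to a generic plane through $O$, whose zeros in $T$ at a fixed slope $S=s$ record how $\mathcal{M}$ distributes on the lines of that slope. The hypothesis that every line meets $\mathcal{M}$ in $0\bmod p$ points forces each specialisation $R(s,T)$ to be a $p$-th power in $\mathbb{F}_q[T]$; equivalently, every coefficient of $T^k$ with $p\nmid k$ vanishes for all $s\in\mathbb{F}_q$. Since those coefficients are elementary symmetric functions of the linear forms $a_PS-b_P$, they have $S$-degree below $|\mathcal{M}|$, and the ones of degree $<q$ must vanish \emph{identically}. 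Exactly as in \cite{5nomi}, this extracts a non-zero lacunary polynomial of controlled degree dividing $R$, which in turn constrains how the lines can distribute the points.

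The role of the multiplicity-one point $O$ is to guarantee non-degeneracy: because one coordinate equals $1$, the configuration is not a global $p$-th power (it is not a $p$-fold multiple of an ambient structure), so the extracted lacunary polynomial is non-trivial. I would then run the \cite{5nomi} dichotomy in this affine setting: either the point set is forced to contain or avoid whole lines in a way that, combined with the pencil through $O$, produces the missing points, or a Bézout/intersection estimate between the lacunary curve and the lines of the pencil bounds from below the number of lines through $O$ that carry strictly more than $p-1$ further points. Summing the resulting local excesses over all lines through $O$ and comparing with \eqref{eq:trivial} should deliver precisely the excess $(q^{m-2}-1)\frac{q-p}{q-1}$, hence \eqref{lowerbound}; the alternative bound \eqref{no-1} handles any slices lacking a multiplicity-one point.

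The main obstacle is the passage from the one-variable planar analysis of \cite{5nomi} to dimension $m\ge 3$: in the plane a single Rédei polynomial governs all lines, whereas in $\AG(m,q)$ the lines through $O$ fill an $(m-1)$-dimensional cone, so one must organise the pencils so that the local excesses add up without double counting and without losing the second-order term. Concretely, the hardest point is to show that the extremal configuration in which \emph{every} line through $O$ meets $\mathcal{M}$ in exactly $p-1$ further points is impossible for $m\ge 3$, and to quantify its failure as exactly $(q^{m-2}-1)\frac{q-p}{q-1}$ rather than merely up to a term of order $q$. I expect the hypotheses $h>1$ (so that $q>p$, the regime in which the input from \cite{5nomi} and Result~\ref{result} is effective) and $p>2$ (ruling out the degenerate even-type behaviour) to enter precisely at this matching step.
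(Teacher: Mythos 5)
Your overall orientation is right --- the proof is indeed a R\'edei/lacunary-polynomial argument in the spirit of \cite{5nomi}, the case $m=2$ does reduce to the trivial count \eqref{eq:trivial}, and a naive induction over hyperplanes through the distinguished point does lose a lower-order term --- but the proposal stops exactly where the real work begins, and the algebraic engine you propose would not function in dimension $m\ge 3$. You attach a R\'edei polynomial $\prod_P(T+a_PS-b_P)^{\mu_P}$ built from linear forms via ``a pencil/projection adapted to a generic plane through $O$''. Such a polynomial only sees the lines lying in that plane (or pencil), so the $p$-th power phenomenon under specialisation only exploits the $0\bmod p$ condition on a two-dimensional slice of directions; you must then reassemble the slices, which is precisely the step you yourself flag as the ``main obstacle'' and leave open. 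The paper avoids slicing altogether: it identifies $\AG(m,q)$ with the field $\F_{q^m}$ and works with $R(X,Y)=\prod_{b\in\cM}\left(X+(Y-b)^{q-1}\right)$, where the factor $X+(y-b)^{q-1}$ depends only on the line joining $y$ and $b$ and divides $X^{(q^m-1)/(q-1)}+(-1)^{m-1}$. A single two-variable polynomial over $\F_{q^m}$ therefore encodes the intersection pattern of $\cM$ with \emph{every} line of the $m$-space at once, and every specialisation $R(X,y)$ is (essentially) a $p$-th power. This is the idea your plan is missing.

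The closing mechanism is also quite different from what you sketch: there is no B\'ezout estimate and no summation of ``local excesses'' over the lines through $O$. Instead one writes $|\cM|=(p-1)(q^{m-1}+q^{m-2})+kp$, assumes $kp<q^{m-2}$ for contradiction, and uses the differential identity \eqref{eq} together with the vanishing of the coefficients $\sigma_j$ for $p\nmid j$ in a controlled range to derive the relation \eqref{kell}; combining this with the factorisation $f(Y)\sigma_{(q^m-1)/(q-1)}(Y)=(Y^{q^m}-Y)T(Y)$, where $f(Y)=\prod_{y\in\cM}(Y-y)$, one shows $(f\sigma_{kp})'\equiv 0$, hence $f\mid\sigma_{kp}^{p-1}$, hence by a degree count $\sigma_{kp}\equiv 0$; finally Lucas' theorem exhibits the non-zero coefficient $\binom{|\cM|}{kp}=1$ of $X^{(p-1)(q^{m-1}+q^{m-2})}Y^{kp(q-1)}$ inside $\sigma_{kp}$, a contradiction. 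The hypotheses also enter elsewhere than you place them: the multiplicity-one point is what gives $kp>0$ via \eqref{eq:trivial}, while $p>2$ and $h>1$ (so $q\ge p^2>2p$) are exactly what make the degree inequality $kp(q-1)+2|\cM|-2<q^m$ hold, forcing $(f\sigma_{kp})'$ to vanish identically. As written, your text is a programme in which both the decisive higher-dimensional construction and the final contradiction are absent.
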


\begin{proof}
	Firstly, note that for $m=2$, the lower bound in the statement coincides with that in \eqref{eq:trivial}. Hence, let us suppose $m >3$. 
	By counting points of $\cM$ on lines incident with a fixed point not in $\cM$, we obtain $p \mid |\cM|$. 
	We will use the same type of representation for $\AG(m,q)$ as the one used in \cite[Section 2]{5nomi} for $\AG(2,q)$. 
	In this way we can identify $\cM$ with a multiset of $\F_{q^m}$. Define the polynomial
	\[
	R(X,Y)=\prod_{b\in \cM}(X+(Y-b)^{q-1})=\sum_{j=0}^{|\cM|}\sigma_j(Y)X^{|\cM|-j}.
	\] 
	For $j<0$ and $j>|\cM|$ we define $\sigma_{j}(Y)$ to be the zero polynomial.
	
	For $y\in \cM$ with $\mu(y)=t$, it holds that
	\begin{equation}
		\label{eq0}
		R(X,y)=X^t\left (X^{\frac{q^{m}-1}{q-1}}+(-1)^{m-1} 
		\right )^{p-t}g_y(X)^p,
	\end{equation}
	where the leading coefficient of $g_y(X)$ is $1$. 
	Whereas, if $y\notin \cM$, then
	\begin{equation}
		\label{eq00}
		R(X,y)=h_y(X)^p,
	\end{equation}
	for some $h_y(X)\in \F_{q^m}[X]$. 
	It means that for each $y\in \F_{q^m}$, if $0<j< \frac{q^{m}-q}{q-1}$ and $p \nmid j$, then $\sigma_j(y)=0$. Then for these $j$'s the polynomial $Y^{q^m}-Y$ divides $\sigma_j(Y)$, which is of degree at most $j(q-1)<q^m$, and hence $\sigma_j(Y)$ is the zero polynomial when $0<j< \frac{q^{m}-q}{q-1}$ and $p \nmid j$. Then we may write
	
	\begin{equation}
		\label{R(X,Y)}
		\begin{split}
			R(X,Y)=&X^{|\cM|}+\sigma_p(Y)X^{|\cM|-p}+\sigma_{2p}(Y)X^{|\cM|-2p}+
			\cdots + \sigma_{q^{m-1}+\ldots+q}(Y)X^{|\cM|-(q^{m-1}+\ldots+q)}+\\
			&\sum^{\vert \cM \vert}_{j=\frac{q^{m}-1}{q-1}}\sigma_{j}(Y)X^{\vert \cM \vert -j}.
		\end{split}
	\end{equation}
	The partial derivative of $R(X,Y)$ with respect to $Y$, evaluated in $y \in \F_{q^m}$, is 
	\[\frac{\partial R}{\partial Y}(X,y)=
	\left(\sum_{b\in \cM} \frac{-(y-b)^{q-2}}{X+(y-b)^{q-1}} \right)R(X,y).\]
	Since the above denominators are all divisors of $X^{\frac{q^{m}-1}{q-1}}+(-1)^{m-1}$, we obtain
	\begin{equation}
		\label{eq}
		R(X,y)G_y(X)=(X^{\frac{q^{m}-1}{q-1}}+(-1)^{m-1})\frac{\partial R}{\partial Y}(X,y)
	\end{equation}
	for some polynomial $G_y(X)\in \F_{q^m}[X]$,  depending on $y$, whose degree is at most $\frac{q^{m}-1}{q-1}-p$.
	
	Next, we prove some properties of the polynomial $G_y(X)$.
	
	Note that at the right-hand side of \eqref{eq} the highest degree, which is not $1 \bmod p$, is at most $|\cM|$.
	Assume $y \notin \cM$, if there was a non-constant term in $G_y(X)$ of degree not $1 \bmod p$ then there would be a term on the left-hand side of \eqref{eq} of degree larger than $|\cM|$ and of degree not $1 \bmod p$ (cf. \eqref{eq00}). This would be a contradiction, which proves that
	\[G_y(X)=c_y+X H_y(X)^p\]
	for some polynomial $H_y(X)\in \F_{q^m}[X]$. To find the constant $c_y \in \F_{q^m}$,  we compare the coefficients of $X^{|\cM|}$ with \eqref{eq}.
	It follows that \[c_y=\sigma_{\frac{q^{m}-1}{q-1}}'(y).\]
	
	\noindent Next, we prove some properties of the $\sigma_j$'s. For each $i$ such that $|\cM|-i$ is not congruent to $0$ or $1$ modulo $p$ (i.e., $i$ is not congruent to $0$ or $-1$ modulo $p$) and $y\notin \cM$, on the left-hand side of \eqref{eq} the coefficients of terms of degree $|\cM|-i$ are equal to zero. 
	
	For each $y \in \F_{q^m}$, on the right-hand side of \eqref{eq} the coefficient of $X^{|\cM|-i}$ is 
	\begin{equation}\label{sigmas'}
		(-1)^{m-1}\sigma'_{i}(y)+\sigma'_{i+(q^m-1)/(q-1)}(y).
	\end{equation}
	We can put
	\[|\cM|=(p-1)(q^{m-1}+q^{m-2})+kp\geq 1+(p-1)\frac{q^m-1}{q-1}.\] 
	Clearly, $kp>0$. To obtain a contradiction, from now on we assume $kp< q^{m-2}$.
	Then
	\begin{equation}\label{sizeM}
		(p-1)\frac{q^{m}-1}{q-1}+kp > |\cM|,
	\end{equation} 
	and hence $\sigma_{(p-1)\frac{q^m-1}{q-1}+kp}(Y)$ is the zero polynomial by definition. Since $(p-2)\frac{q^m-1}{q-1}+kp$ is not congruent to $0$ or $-1$ modulo $p$, on the left-hand side of \eqref{eq} the coefficient of $X^{|\cM|-((p-2)\frac{q^m-1}{q-1}+kp)}$ is equal to $0$ and hence by \eqref{sigmas'}:
	
	\[\sigma'_{(p-2)\frac{q^m-1}{q-1}+kp}(y)=(-1)^m\sigma'_{(p-1)\frac{q^{m}-1}{q-1}+kp}(y).\]
	Here, the right-hand side -- and hence also the left-hand side -- are equal to zero. Iterating the same argument, we obtain $\sigma'_{(p-\ell)\frac{q^m-1}{q-1}+kp}(y)=0$, 
	for any $\ell \in \{1,\ldots,p-1\}$, in particular 
	\[\sigma'_{\frac{q^m-1}{q-1}+kp}(y)=0.\]
	It follows that $(-1)^{m-1}\sigma'_{kp}(y)$ is the coefficient of $X^{|\cM|-kp}$ on the right-hand side of $\eqref{eq}$. At the left-hand side, this coefficient is $\sigma_{kp}(y)c_y=\sigma_{kp}(y)\sigma_{(q^{m}-1)/(q-1)}'(y)$ and hence 
	\begin{equation}
		\label{kell}
		\sigma_{kp}(y)\sigma_{\frac{q^m-1}{q-1}}'(y)=(-1)^{m-1}\sigma'_{kp}(y)
	\end{equation}
	for each $y\notin \cM$. Now we examine $\sigma_{\frac{q^{m}-1}{q-1}}(Y)$. If $y\notin \cM$ then $\sigma_{\frac{q^{m}-1}{q-1}}(y)=0$ (cf. \eqref{eq00}) and hence the degree of $\sigma_{\frac{q^{m}-1}{q-1}}(Y)$ is at least $q^m-|\cM|$, or it is the zero polynomial. 
	If $y\in \cM$ with multiplicity $1\leq t \leq p-1$, then $\sigma_{\frac{q^{m}-1}{q-1}}(y)=(p-t)(-1)^{m-1}=(-1)^mt$ (cf.\ \eqref{eq0}), in particular $\sigma_{\frac{q^{m}-1}{q-1}}(Y)$ is not the zero polynomial.
	
	Put $f(Y)=\prod_{y\in \cM}(Y-y)$. Then  $f(Y)\sigma_{\frac{q^m-1}{q-1}}(Y)$ is zero for each $y\in \F_{q^m}$,  hence, we may put it in the following form:
	\begin{equation}
		\label{deriv}
		f(Y)\sigma_{\frac{q^m-1}{q-1}}(Y)=(Y^{q^m}-Y)T(Y).
	\end{equation}
	Moreover, the degree of $f(Y)\sigma_{\frac{q^m-1}{q-1}}(Y)$ is at most $| \cM| + q^m-1$. This implies that $T(Y)$ has degree at most $|\cM|-1$. For each $y\in \cM$, if $\mu(y)=t$, then, after the derivation of \eqref{deriv}
	\[(-1)^{m-1}t f'(y)=T(y).\]
	If $t=1$, then $(-1)^{m-1}f'(y)=T(y)$, while if $t >1$, $y$ is a multiple root of $f(Y)$ and hence $f'(y)=0$, thus $T(y)=(-1)^{m-1}tf'(y)=0$. Therefore,
	\[(-1)^{m-1}f'(y)=T(y)\]
	holds for each $y \in \cM$ and since $|\cM|$ is larger than the degree of $((-1)^{m-1}f'-T)(Y)$, it follows that $(-1)^{m-1}f'(Y)=T(Y)$.
	
	Again, after derivating \eqref{deriv}, substituting $y\notin \cM$ and applying $\sigma_{\frac{q^m-1}{q-1}}(y)=0$, we obtain
	\[f(y)\sigma'_{\frac{q^m-1}{q-1}}(y)=-T(y)=(-1)^mf'(y).\]
	Then, combined with \eqref{kell}, we obtain
	\[\sigma_{kp}(y)f'(y)=-\sigma'_{kp}(y)f(y)\]
	and hence 
	$(f\sigma_{kp})'(y)=0$, for each $y\notin \cM$.
	Since the degree of $(f\sigma_{kp})(Y)$ is at most $kp(q-1)+|\cM|$, which is divisible by $p$, the polynomial $(f\sigma_{kp})'$ has the degree at most $kp(q-1)+|\cM|-2$. In order for $(f\sigma_{kp})'$ to be the zero polynomial, we need this degree to be less than $q^m-|\cM|$. Equivalently, we need
	\[kp(q-1)+2|\cM|-2<q^m,\]
	which certainly holds when
	\[kp(q-1)+2\left ((p-1)\frac{q^{m}-1}{q-1}+kp-1\right)-2 <   q^m.\]
	This in turn certainly holds when $kp \leq q^{m-2}(q-2p)$. In summary, if $|\cM|<(p-1)\frac{q^m-1}{q-1}+q^{m-2}(q-2p)$, then $(f\sigma_{kp})'$ is the zero polynomial. Since the lower bound in the statement for $|\cM|$ is smaller than this number, we may indeed assume for the rest of the proof that $(f\sigma_{kp})'$ is the zero polynomial. 
	This means that
	$(f\sigma_{kp})(Y)$ is a  $p$-th power of a polynomial in $\F_{q^m}[Y]$, and hence $f(Y)$ divides $\sigma_{kp}(Y)^{p-1}$. 
	Indeed, $f$ is product of linear factors, each of them with multiplicity at most $p-1$. Assume $(Y-c) \mid f(Y)$ for some $c$. Then $c$ is at least $p$-fold root of $f\sigma_{kp}$, so it is also a root of $\sigma_{kp}$. Hence, $(Y-c)^{p-1} \mid \sigma_{kp}(Y)^{p-1}$. Since this holds for each root $c$ of $f$,  $f(Y) \mid \sigma_{kp}(Y)^{p-1}$.

	Moreover, by definition, $f$ is a product of linear factors over $\F_{q^m}$, each of them with multiplicity at most $p-1$. It follows that either $\deg f \leq  (p-1)\deg\sigma_{kp}$, 
	and hence $|\cM|\leq kp(p-1)(q-1)$, or $\sigma_{kp}$ is the zero polynomial.
	
	Recall $kp<q^{m-2}$, and hence
	\[|\cM|=(p-1)(q^{m-1}+q^{m-2})+kp>kp(p-1)(q-1),\] 
	i.e. $\deg f>(p-1)\deg \sigma_{kp}$ and hence $\sigma_{kp}$ is the zero polynomial.
	Note that the coefficient of the term
	$X^{(p-1)(q^{m-1}+q^{m-2})}Y^{kp(q-1)}$ in $R(X,Y)$ is
	$\binom{|\cM|}{kp}$. According to Lucas' theorem, recall 
	$kp<q^{m-2}$,  $\binom{|\cM|}{kp}=1$ and hence it is a non-zero term which appears also in $\sigma_{kp}$, a contradiction. 
	
	Since the assumption $kp<q^{m-2}$ led to a contradiction, the assertion follows.
\end{proof}

Theorem \ref{th:lowerbound} assumes $p>2$, while the analogous question for $p=2$ asks for the minimum size of a non-empty (ordinary) point set $\cS$ in $\AG(m,q)$ such that lines meet $\cS$ in $0 \bmod p$ points. The sharp lower bound for the size of such point sets is given in \eqref{Del}, and it coincides with the value that \eqref{lowerbound} yields when $p=2$.


\medskip
\noindent We conclude this section by noting that if $\mathbf{s}\in \cC_{\mathrm{AG}}(m,q)^\perp$, then we can extend $\mathbf{s}$ with $(q^m-1)/(q-1)$ zeroes (corresponding to points of the hyperplane at infinity) to obtain a codeword $\bar{\mathbf{s}}$ in $\cC_{\mathrm{PG}}(m,q)^\perp$. Clearly, $w(\bar{\mathbf{s}})=w(\mathbf{s})$ and $\sigma(\bar{\mathbf{s}})=\sigma(\mathbf{s})$; hence, the minimum weight of $\cC_{\mathrm{AG}}(m,q)^\perp$ is at least the minimum weight of $\cC_{\mathrm{PG}}(m,q)^\perp$.

\section{Examples attaining the lower bound}\label{sec_examples}

This subsection is devoted to the construction of a family of multisets in $\AG(m,q)$ that contains at least one point of multiplicity $1$, such that every affine line intersects the multiset in a number of points divisible by $p$, and whose size attains the bound given in \eqref{lowerbound}.

\medskip

\noindent Let $V$ denote the $(m+1)$-dimensional $\F_q$-vector space underlying $\Sigma=\PG(m,q)$. If $q=p^h$, then we can view $V$ as an $h(m+1)$-dimensional $\F_p$-vector space. If $W$ is an $r$-dimensional $\F_p$-subspace of $V$, then the set of points $L_W$ of $\Sigma$ defined by non zero vectors of $W$ is called an $\F_p$-\textit{linear set} of \textit{rank} $r$. It is straightforward to see that  $\vert L_W \vert \leq \frac{p^r-1}{p-1}$. Lines of $\Sigma$ correspond to sets of points defined by non-zero vectors of $2$-dimensional $\F_q$-subspaces of $V$, which can be viewed as $2h$-dimensional $\F_p$-subspaces. 
By Grassmann's Identity, if $U$ is an $(h(m-1)+1)$-dimensional $\F_p$-subspace of $V$, then it meets every $2$-dimensional $\F_q$-subspace non-trivially. If $T$ is any $2$-dimensional $\F_q$-subspace and $\ell$ is the line of $\Sigma$ defined by non zero vectors of $T$, then it follows that $\ell \cap L_U\neq \emptyset$. Moreover, it can be seen that $|\ell \cap L_U |\equiv 1 \pmod p$, see \cite{Bonoli-Polverino} and \cite[Proposition 2.2]{linearset}.

The subsequent results are crucial for achieving our goal.

\begin{proposition}
	\label{prop1}
	Let $L_U$ and $L_W$ be any two $\F_p$-linear sets of rank $h(m-1)+1$ in $\PG(m,q)$, $q=p^h$, $h>1$. Put $\cS=L_U \triangle L_W$ (the symmetric difference of $L_U$ and $L_W$) and define $\mu$ as follows:
	\begin{equation}\label{value}
		\mu(x)=
		\begin{cases}
			t & \textnormal{if} \, x \in L_U \setminus L_W\\ 
			p-t & \textnormal{if} \, x\in L_W\setminus L_U,
		\end{cases}
	\end{equation}
	where $1 \leq t \leq p-1$ is fixed. Then $\cM=(\cS,\mu)$ is a multiset meeting each line of $\PG(m,q)$ in $0 \bmod p$ points.
\end{proposition}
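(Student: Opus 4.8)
The plan is to verify the claimed intersection condition line by line, reducing everything to the known fact recalled just before the statement: any $\F_p$-linear set of rank $h(m-1)+1$ meets every line of $\PG(m,q)$ in $1 \bmod p$ points. Fix an arbitrary line $\ell$ of $\PG(m,q)$. By that fact we have $|\ell \cap L_U| \equiv 1 \pmod p$ and $|\ell \cap L_W| \equiv 1 \pmod p$, so in particular $\ell$ meets each of the two linear sets in a nonempty, $1 \bmod p$ number of points. The quantity we must control is $|\ell \cap \cM| = \sum_{x \in \ell \cap \cS} \mu(x)$, which by the definition \eqref{value} of $\mu$ equals $t$ times the number of points of $\ell$ in $L_U \setminus L_W$ plus $(p-t)$ times the number of points of $\ell$ in $L_W \setminus L_U$.

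The key step is to rewrite this weighted count in terms of the three cardinalities $|\ell \cap L_U|$, $|\ell \cap L_W|$ and $|\ell \cap L_U \cap L_W|$. First I would set $a = |\ell \cap (L_U \setminus L_W)|$ and $b = |\ell \cap (L_W \setminus L_U)|$, and write $c = |\ell \cap L_U \cap L_W|$, so that $|\ell \cap L_U| = a + c$ and $|\ell \cap L_W| = b + c$. Then
\begin{equation}
	|\ell \cap \cM| = t\,a + (p-t)\,b \equiv t\,a - t\,b = t(a-b) \pmod p.
\end{equation}
Now $a - b = (a+c) - (b+c) = |\ell \cap L_U| - |\ell \cap L_W| \equiv 1 - 1 = 0 \pmod p$, using the $1 \bmod p$ property for both linear sets. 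Hence $|\ell \cap \cM| \equiv t \cdot 0 = 0 \pmod p$, which is exactly the assertion that $\ell$ meets $\cM$ in $0 \bmod p$ points.

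Since $\ell$ was an arbitrary line and the multiplicity function $\mu$ takes values in $\{1, \ldots, p-1\}$ on the nonempty symmetric difference $\cS = L_U \triangle L_W$, this shows that $\cM = (\cS, \mu)$ is a genuine multiset meeting every line of $\PG(m,q)$ in $0 \bmod p$ points, completing the proof. I do not anticipate a serious obstacle here: the whole argument is a short congruence computation, and the only external input is the $1 \bmod p$ intersection property of linear sets of this rank, which has already been recalled. The one point requiring minor care is that the common part $\ell \cap L_U \cap L_W$ contributes multiplicity $0$ in $\cM$ (those points lie outside $\cS$) and therefore correctly drops out of the count $|\ell \cap \cM|$; tracking this is precisely what the substitution $a - b = |\ell \cap L_U| - |\ell \cap L_W|$ accomplishes, so the cancellation of $c$ is automatic.
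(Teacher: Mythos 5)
Your proof is correct and follows essentially the same route as the paper: both decompose $\ell\cap(L_U\cup L_W)$ into the three parts, invoke the $1\bmod p$ intersection property of rank-$(h(m-1)+1)$ linear sets with lines, and conclude via the congruence $ta+(p-t)b\equiv t(a-b)\equiv 0\pmod p$. The only difference is the labelling of the three cardinalities.
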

\begin{proof}
	Let $\ell$ be any line of $\PG(m,q)$ and put $a=|\ell \cap (L_U\setminus L_W)|$, $b=|\ell \cap (L_U \cap L_W)|$ and $c=|\ell \cap (L_W\setminus L_U)|$. From above it holds that $a+b\equiv 1 \equiv b+c  \pmod p$. Moreover, we also have the following:
	\begin{equation*}
		|\cM \cap \ell|= ta+(p-t)c\equiv 0 \pmod p.
	\end{equation*}
\end{proof}

An $\F_p$-subspace $U$ of $H\cong \F_q^m$ is said to be an $(u,v)_p$-\textit{evasive subspace} if the $\F_q$-subspace $ \langle U \rangle_{\F_{q}}$ has dimension at least $u$ over $\F_q$ and the $u$‐dimensional $\F_q$-subspaces of $H$ meet $U$ in  $\F_p$‐subspaces of dimension at most $v$. For more details see \cite{evasive}. We will need the following result on $(m-1,(m-2)h)_p$-evasive subspaces: 

\begin{result}[{\cite[Special case of Corollary 4.4]{evasive}}]
	\label{eva}
	Let $U$ be an $\F_p$-subspace of $H\cong \F_q^m$, $q=p^h$, and assume that $U$ meets every $(m-1)$-dimensional $\F_q$-subspace of $H$ in an $\F_p$-subspaces of dimension at most $(m-2)h$. Then $\dim_p (U) \leq (m-1)h-1$.
\end{result}

\begin{proposition}
	\label{prop2}
	Let $f\colon \F_q^{m-1}\rightarrow \F_q$ be an $\F_p$-multilinear map, $m \geq 2$. In $V= \F_q^{m+1}$ put
	\begin{equation} \label{U}
		U=\{ (x_0,x_1,\ldots,x_{m-2},f(x_0,x_1,\ldots,x_{m-2}),y) \colon x_0,x_1,\ldots,x_{m-2} \in \F_q,\, y\in \F_p \},
	\end{equation}
	\begin{equation}\label{W} 
		W=\{(x_0,x_1,\ldots,x_{m-2},y,0)\colon x_0,x_1,\ldots,x_{m-2}\in \F_q, \, y\in \F_p\}.     
	\end{equation}
	
	Define $\cM=(L_U \triangle L_W,\mu)$, a multiset of $\PG(m,q)$, with $\mu$ as in Proposition \ref{prop1} with $t=1$.
	\begin{enumerate}[\rm(1)]
		\item There is a hyperplane disjoint from $\cM$.
		\item $\cM$ is of size
		\[(p-1)(q^{m-1}+q^{m-2})+q^{m-2}\]
		if and only if 
		\[|L_U \cap L_W|=q^{m-2}\frac{q-1}{p-1}+\frac{q^{m-2}-1}{q-1}. \]
		
	\end{enumerate}
\end{proposition}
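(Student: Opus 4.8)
The plan is to extract two structural facts about the linear sets $L_U$ and $L_W$, and then dispatch (2) by a counting identity and (1) by the evasive–subspace bound, Result~\ref{eva}.

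First I would determine the two linear sets explicitly. Writing a general vector of $V$ as $(z_0,\dots,z_m)$, every vector of $W$ has $z_m=0$, so $L_W\subseteq \pi_W:=\{z_m=0\}$; conversely any point of $\pi_W$ with $z_{m-1}\neq 0$ rescales (by $z_{m-1}^{-1}$) to a vector whose $(m-1)$-th coordinate is $1\in\F_p$, hence lies in $W$, and the points with $z_{m-1}=0$ already lie in $L_W$ via $y=0$. Thus $L_W=\pi_W$ and $|L_W|=\frac{q^m-1}{q-1}$. For $L_U$ I would split by the last coordinate $y$: the $y=0$ vectors form $U\cap\pi_W$, an $\F_p$-subspace of dimension $(m-1)h$, producing the infinite points $L_U\cap\pi_W=L_U\cap L_W$; every point with $y\neq 0$ has a unique representative in the coset $\{y=1\}$, and these $p^{(m-1)h}=q^{m-1}$ representatives are pairwise non-proportional, so $L_U\setminus\pi_W$ has exactly $q^{m-1}$ points. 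This yields the decomposition $|L_U|=q^{m-1}+|L_U\cap L_W|$ and identifies the support of $\cM=L_U\triangle L_W$ as the affine graph $L_U\setminus\pi_W$ (multiplicity $1$) together with $\pi_W\setminus L_U$ (multiplicity $p-1$).

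Part (2) is then bookkeeping. Putting $b=|L_U\cap L_W|$,
\[ |\cM|=q^{m-1}+(p-1)\Big(\tfrac{q^m-1}{q-1}-b\Big)=q^{m-1}+(p-1)\tfrac{q^m-1}{q-1}-(p-1)b . \]
Since $b\mapsto|\cM|$ is affine with nonzero slope $-(p-1)$, the equation $|\cM|=(p-1)(q^{m-1}+q^{m-2})+q^{m-2}$ has a unique solution in $b$; solving it and simplifying with $\frac{q^m-1}{q-1}-q^{m-1}=\frac{q^{m-1}-1}{q-1}$ and $q^{m-1}-pq^{m-2}=q^{m-2}(q-p)$ returns exactly $b=q^{m-2}\frac{q-1}{p-1}+\frac{q^{m-2}-1}{q-1}$, which is the asserted equivalence.

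The substance is (1). Identifying $\pi_W\cong\F_q^m$, let $U'=U\cap\pi_W$ be the graph of $f$, an $\F_p$-subspace of dimension $(m-1)h$. As $(m-1)h>(m-1)h-1$, the contrapositive of Result~\ref{eva} yields an $(m-1)$-dimensional $\F_q$-subspace $S=\ker\beta$ of $\pi_W$ with $\dim_p(U'\cap S)\geq (m-2)h+1$. I would then use this single bound twice. First, $(m-2)h+1$ is precisely the Grassmann threshold forcing $U'\cap S$ to meet every $1$-dimensional $\F_q$-subspace of $S$, so every point of the corresponding $(m-2)$-flat lies in $L_{U'}=L_U\cap\pi_W$, i.e. consists of determined directions. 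Second, $\dim_p\mathrm{Im}(\beta|_{U'})\leq (m-1)h-((m-2)h+1)=h-1$, so $\beta|_{U'}$ is not onto $\F_q$. Choosing $a_m$ with $-a_m\notin\mathrm{Im}(\beta|_{U'})$ and setting $H=\{\beta(z_0,\dots,z_{m-1})+a_mz_m=0\}$, I would verify that $H\cap\pi_W$ is the flat defined by $S$, which lies in $L_U\cap\pi_W$ and so misses $\pi_W\setminus L_U$, while on an affine graph point $(\mathbf{x},f(\mathbf{x}),1)$ the form equals $\beta(\mathbf{x},f(\mathbf{x}))+a_m\neq 0$; hence $H$ is disjoint from $\cM$. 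The main obstacle is recognising that Result~\ref{eva} must be exploited in this two-fold way on the same hyperplane $S$ — simultaneously as a flat of determined directions and as the kernel of a non-surjective functional on the graph — rather than attempting to construct $H$ by hand.
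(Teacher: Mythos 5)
Your proposal is correct. Part (2) is handled exactly as in the paper: both arguments reduce to the identity $|\cM|=(p-1)\bigl(|L_W|-|L_U\cap L_W|\bigr)+|L_U\setminus L_W|$ together with $|L_U\setminus L_W|=q^{m-1}$ and $|L_W|=\frac{q^m-1}{q-1}$, and then solve for $|L_U\cap L_W|$. For part (1) you take a genuinely different route after the common first step. Like the paper, you invoke Result \ref{eva} (via Grassmann) to produce a hyperplane $\Pi=\PG(S)$ of $L_W$ on which $U'=U\cap H$ has $\F_p$-dimension at least $(m-2)h+1$, forcing $\Pi\subseteq L_U\cap L_W$. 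The paper then argues non-constructively: it assumes every hyperplane of $\PG(m,q)$ through $\Pi$ other than $L_W$ meets $\cM$, notes that each such intersection is a $(0\bmod p)$-multiset of $\AG(m-1,q)$ with a multiplicity-$1$ point, and applies Theorem \ref{th:lowerbound} to each of the $q$ hyperplanes to exceed $|L_U\setminus L_W|=q^{m-1}$, a contradiction. You instead exploit the same inequality $\dim_p(U'\cap S)\geq(m-2)h+1$ a second time, via rank--nullity, to conclude that $\beta|_{U'}$ is not surjective onto $\F_q$, and from this you write down the disjoint hyperplane $H=\{\beta(z_0,\dots,z_{m-1})+a_mz_m=0\}$ explicitly. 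Your version is more elementary and self-contained --- it does not lean on the main lower bound Theorem \ref{th:lowerbound} (which the paper's pigeonhole argument requires) and it exhibits the hyperplane rather than merely proving its existence --- while the paper's version is shorter to state given that the heavy theorem is already available. Both are valid; I see no gap in yours.
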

\begin{proof}
	$(1)$ First, observe that $L_W=\PG(H,\F_q)$ where $H$ is an $m$-dimensional $\F_q$-vector space. Hence,  $L_U \cap L_W=L_{U\cap H}$ is an $\F_p$-linear set of rank $(m-1)h$. By Grassmann's Identity, $L_{U\cap H}$ meets hyperplanes of $L_W$ in linear sets of rank at least $(m-2)h$. By Result \ref{eva}, it follows that $L_{U \cap H}$ cannot meet every hyperplane of $L_W$ in an $\F_p$-linear set of rank exactly $(m-2)h$. 
	Hence, $L_{U \cap H}$ meets at least a hyperplane, say $\Pi$ of $L_W$, in an $\F_p$-linear set of rank at least $(m-2)h+1$. Then, again by Grassmann's Identity, $\Pi$ must be fully contained in $L_{U\cap H}$. 
	To prove $(1)$, it is enough to find a hyperplane of $\PG(m,q)$ through $\Pi$ which does not meet $\cM$. Assume for the contrary, 
	that each hyperplane $\cH$ of $\PG(m,q)$ through $\Pi$ and distinct from $L_W \colon X_m=0$ meets $\cM$. Since $\cM \cap \cH$ is a multiset of $\cH \setminus \Pi \cong \AG(m-1,q)$ meeting every line in $0 \bmod p $ points and having only points of multiplicity $1$, by applying Theorem \ref{th:lowerbound}, we have that 
	$$q^{m-1}= \vert L_U \setminus L_W \vert = \sum_{\substack{\Pi \subset \cH \\ \cH \neq L_W}} |\cM \cap \cH|  \geq q ((p-1)(q^{m-2}+q^{m-3})+q^{m-3}),$$
	a contradiction.
	
	To prove $(2)$, it is enough to note that, by construction, 
	$$|\cM|= (p-1)\vert L_W \setminus (L_{U} \cap L_{W}) \vert + \vert L_U \setminus L_W \vert $$
	and  $|L_U \setminus L_W|= q^{m-1}$.
\end{proof}

By Proposition \ref{prop2}, we can get an upper bound on the size of an $\F_p$-linear set of rank $hm$ in $\PG(m,q)$, $m \geq 2$, $q=p^h$, and $h >1$. Indeed, the following holds.

\begin{theorem} \label{direction}
	Let $L$ be a $\F_p$-linear set of rank $hm$ in $\PG(m,q)$, $q=p^h$. Then,
	\[\vert L \vert \leq q^{m-1}\frac{q-1}{p-1}+\frac{q^{m-1}-1 }{q-1}.\]
\end{theorem}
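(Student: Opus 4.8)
The plan is to read off the bound from Proposition \ref{prop2}, applied one dimension higher, after first realizing an arbitrary rank-$hm$ linear set as the intersection $L_U\cap L_W$ occurring there. Throughout I keep the standing hypotheses $m\ge 2$, $h>1$ and $p>2$ under which Proposition \ref{prop2} and Theorem \ref{th:lowerbound} are available.

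First I would reduce to the graph form \eqref{U}. Write $L=L_{W_0}$ with $W_0\le\F_q^{m+1}$ an $\F_p$-subspace of rank $hm$. Because $|L|\le\frac{p^{hm}-1}{p-1}=\frac{q^m-1}{p-1}<\frac{q^{m+1}-1}{q-1}$, the linear set $L$ cannot cover $\PG(m,q)$, so there is a point $P\notin L$. Applying a projectivity I may assume $P=[0:\cdots:0:1]$, i.e.\ $W_0\cap\langle e_m\rangle_{\F_q}=\{0\}$. Then the projection of $\F_q^{m+1}$ that forgets the last coordinate restricts to an $\F_p$-isomorphism $W_0\to\F_q^m$, whence $W_0=\{(x,g(x)):x\in\F_q^m\}$ for a uniquely determined $\F_p$-linear map $g\colon\F_q^m\to\F_q$; this is exactly the shape of $U\cap H$ in \eqref{U}.

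Next I lift the picture to $\PG(m+1,q)$. Taking $f=g$ in Proposition \ref{prop2} (with ambient dimension $m+1$), the subspaces $U,W\le\F_q^{m+2}$ have rank $hm+1$, the hyperplane $L_W\colon X_{m+1}=0$ is the full $\PG(m,q)$, and $L_U\cap L_W=L_{U\cap H}$ is a projective copy of $L$. Let $\cM=(L_U\triangle L_W,\mu)$ with $t=1$ be the associated multiset. By Proposition \ref{prop2}(1) some hyperplane of $\PG(m+1,q)$ is disjoint from $\cM$, so $\cM$ is an affine $(0\bmod p)$-multiset of $\AG(m+1,q)$ possessing a point of multiplicity $1$ (every one of the $q^m>0$ points of $L_U\setminus L_W$ has multiplicity $t=1$). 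Theorem \ref{th:lowerbound} in dimension $m+1$ therefore gives $|\cM|\ge(p-1)(q^m+q^{m-1})+q^{m-1}$, the same estimate being the Delsarte bound \eqref{Del} when $p=2$, since then all multiplicities equal $1$. On the other hand, directly from the definition of $\mu$,
\[
|\cM|=|L_U\setminus L_W|+(p-1)\,|L_W\setminus(L_U\cap L_W)|=q^m+(p-1)\Bigl(\tfrac{q^{m+1}-1}{q-1}-|L|\Bigr),
\]
using $|L_U\setminus L_W|=q^m$ exactly as computed in Proposition \ref{prop2}. Substituting this equality into the previous inequality and solving for $|L|$ yields
\[
|L|\le\tfrac{q^{m+1}-1}{q-1}-q^m-q^{m-1}+\tfrac{q^m-q^{m-1}}{p-1}=\tfrac{q^{m-1}-1}{q-1}+q^{m-1}\,\tfrac{q-1}{p-1},
\]
which is the assertion.

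The only genuinely new ingredient is the first reduction: turning an arbitrary rank-$hm$ linear set into a graph so that it matches the configuration of Proposition \ref{prop2}. I expect the main point to verify carefully is that such a reduction is always legitimate — that a point outside $L$ exists (guaranteed by the crude size bound $|L|\le\frac{q^m-1}{p-1}$) and that raising the dimension by one preserves all hypotheses needed by Proposition \ref{prop2}(1) and Theorem \ref{th:lowerbound}; once this is in place, the remaining algebra in the last display is routine.
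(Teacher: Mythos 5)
Your proposal is correct and follows essentially the same route as the paper: realize $L$ as a graph $\{(x,f(x))\}$ after moving a point off $L$, lift to $\PG(m+1,q)$ via the subspaces $\overline{U},\overline{W}$ of Proposition \ref{prop2}, and convert the lower bound on $|\cM|$ from Theorem \ref{th:lowerbound} (available because Proposition \ref{prop2}(1) supplies a disjoint hyperplane) into the stated upper bound on $|L|=|L_{\overline{U}}\cap L_{\overline{W}}|$. The only difference is cosmetic: you unpack the inequality explicitly from the size formula for $|\cM|$, where the paper simply cites Proposition \ref{prop2}(2), and you justify the existence of a point outside $L$, which the paper leaves implicit.
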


\begin{proof}
	After a suitable collineation, for any $\F_p$-linear set $L$ of rank $hm$ in $\PG(m,q)$ we may assume that $(0:0:\ldots :0:1)\notin L$ and hence $L$ can be written as $L=L_U$,  where
	\[U=\{(x_0,x_1,\ldots,x_{m-1}, f(x_0,\ldots,x_{m-1})) : x_i \in \F_{q^m}\}\] 
	$U$ is an $\F_p$-subspace of dimension $hm$, and hence $f\colon \F_{q}^{m} \rightarrow \F_q$ is additive. 
	Let us consider  the $\F_p$-linear sets $L_{\overline{U}}$ and $L_{\overline{W}}$ of $\PG(m+1,q)$, 
	where
	\[
	\overline{U}=\{(x_0,x_1,\ldots,x_{m-1},f(x_0,\ldots,x_{m-1}), y): x_0,\ldots,x_{m-1} \in \F_q, y \in \F_p\},
	\]
	\[
	\overline{W}=\{(x_0,x_1,\ldots,x_{m-1},y,0): x_0,\ldots,x_{m-1} \in \F_q, y \in \F_p\}.
	\]
	Then, by Proposition \ref{prop1} with $t=1$, the multiset $\bar{\cM}=(L_{\overline{U}} \triangle L_{\overline{W}}, \mu)$ with $\mu$ as in \eqref{value}
	meets any line of $\PG(m+1,q)$ in $0 \bmod p$ points.
	By Proposition \ref{prop2} $(2)$,
		\[
		|L_U| =|L_{\overline{U}} \cap L_{\overline{W}}|  \leq q^{m-1}\frac{q-1}{p-1}+\frac{q^{m-1}-1 }{q-1},
		\]
		
		hence the result follows.
	\end{proof}
	
	The trivial upper bound for the size of an $\F_p$-linear set of rank $hm$ in $\PG(m,q)$, $q=p^h$, is $\frac{p^{hm}-1}{p-1}=\frac{q^m-1}{p-1}$.  
	It is easy to see that our bound is stronger, indeed $\frac{q^{m}-1}{p-1} >  q^{m-1}\frac{q-1}{p-1}+\frac{q^{m-1}-1 }{q-1}$ follows from
	\[     (q-1)(q^m-1) - (q^{m-1}(q-1)^2+(q^{m-1}-1)(p-1)) = (q-p)(q^{m-1}-1) >0.
	\] 
	
	\bigskip 
	In the remainder, we apply Propositions \ref{prop1} and \ref{prop2} to show that in $\AG(m,q)$ there exist $(0 \bmod p)$-multisets having at least one point with multiplicity $1,$ with size $(p-1)(q^{m-1}+q^{m-2})+q^{m-2}$.
	
	\medskip
	\noindent To this aim we first recall that an $\F_p$-linearized polynomial  $f(x)=\sum_{i=0}^{r-1}a_i x^{p^i}\in \F_q[x]$ is called \textit{scattered} if 
	$$ \left \vert \left \{\frac{f(x)}{x} \colon x \in \F_q^* \right \} \right \vert= \frac{q-1}{p-1}, $$
	for a survey on scattered polynomials, see \cite{survey}.
	
	Now, assume $L_U \subset \PG(m,q)$ to be an $\F_p$-linear set whose underlying $\F_p$-vector space is
	\[ U=\{ (x_0,x_1,\ldots,x_{m-2},f(x_0),y) \colon x_0,x_1,\ldots,x_{m-2} \in \F_q,\, y\in \F_p \},\] where $f$ is any scattered $\F_p$-linearized polynomial. Also, consider the $\F_p$-subspace $W$ of $\F_{q}^{m+1}$ as in \eqref{W}. It is easy to see that $L_U \cap L_W$ is an $\F_p$-linear set contained in the hyperplane of $\PG(m,q)$ with equation $X_m=0$ and it is a cone with basis a maximum scattered $\F_p$-linear set on the line with equations $X_1=X_2=\ldots=X_{m-2}=X_m=0$ and vertex an $(m-3)$-dimensional subspace with equations $X_0=X_{m-1}=X_m=0$. Then,
	$$ \vert L_U \cap L_W \vert =q^{m-2}\frac{q-1}{p-1}+ \frac{q^{m-2}-1}{q-1},$$
	and  $\cM= (L_U \triangle L_W, \mu)$ as defined in \eqref{value}  is a multiset of $\AG(m,q)$ whose size attains the bound in \eqref{lowerbound}. The fact that this construction can be embedded in $\AG(m,q)$ follows from Proposition \ref{prop2} $(1)$.
	
	We note that the characteristic vector of the construction above has the same weight as the one associated with the point set constructed by Lavrauw, Storme and Van de Voorde in \cite[Theorem 4.14] {linearcodes}.  In particular, if $f(X)=X^p$, then our construction has the same support as the one in \cite[Theorem 4.14] {linearcodes}. Moreover, if $p=2$ the example falls into the family exhibited by Calkin, Key and De Resmini in \cite{DeResmini}. Indeed, in such a case $L_U$ is equivalent to a cone with base the $\F_2$-linear set $\{\langle (x,x^{2^s})\rangle_{\F_{2^h}}\,:\, x \in \F_{2^h}^*\} \subseteq \PG(1,2^h)$, with $\gcd(s,h)=1,$ which is the set of directions determined by the points of a translation hyperoval of $\PG(2,2^h)$ (see \cite{payne_complete_1971}). In such a case, our construction provides a set which is equivalent to the one constructed by induction in \cite[Note $1$]{DeResmini}.

	\section{\texorpdfstring{A lower bound on the minimum weight of $\cC_{\PG}(m,q)^{\perp}$}{A lower bound on the minimum weight of CPG(m,q)perp}} \label{sec:lowerbound} 
	
	In this section,  we will show a new lower bound for the minimum weight of a codeword ${\bf s }$ of $\cC_{\PG}(m,q)^{\perp}$ and for the minimum size of $(0 \bmod \,p)$-multisets of $\PG(m,q)$. 
	Firstly, we extend of \cite[Proposition 1]{DeResmini} to multisets.
	
	\begin{lemma}
		\label{lastlemma}
		Let $w_{m-1}$ denote the minimum weight of the code  $\cC^\perp_{\mathrm{PG}}(m-1,q)$ and let $\sigma_{m-1}$ denote the minimum size of a non-empty $(0 \bmod p)$-multiset in $\PG(m-1,q)$, $q=p^h$. 
		If $\cM=(\cS, \mu)$ is a $(0 \bmod p)$-multiset of $\PG(m,q)$ which meets every hyperplane, then
		$$|\cS| \geq qw_{m-1} \quad \textnormal{ and } \quad  |\cM| \geq q\sigma_{m-1}.$$
	\end{lemma}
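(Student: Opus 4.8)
The plan is to prove both inequalities at once by a single double-counting argument over the set of all hyperplanes of $\PG(m,q)$, reducing the situation in dimension $m$ to the minimality quantities $w_{m-1}$ and $\sigma_{m-1}$ one dimension lower. The mechanism that produces the factor $q$ is purely numerical: a point lies on roughly a $\frac1q$-fraction of all hyperplanes, so summing a uniform per-hyperplane lower bound and dividing by the point-incidence count multiplies by about $q$.

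First I would check that each hyperplane section inherits the defining property. For every hyperplane $H$ of $\PG(m,q)$ the restriction $\cM\cap H$ is again a $(0\bmod p)$-multiset, now living in $H\cong\PG(m-1,q)$: every line of $H$ is a line of $\PG(m,q)$, hence meets $\cM$ in $0\bmod p$ points, and all of these points lie on that line and therefore in $H$. Since $\cM$ is assumed to meet every hyperplane, $\cM\cap H$ is non-empty, so its characteristic vector is a non-zero codeword of $\cC^\perp_{\PG}(m-1,q)$, of weight $|\cS\cap H|$ under the running normalization of the multiplicities. By the definitions of $w_{m-1}$ and $\sigma_{m-1}$ this gives, for every hyperplane $H$,
\[ |\cS\cap H|\ge w_{m-1} \qquad\text{and}\qquad |\cM\cap H|\ge \sigma_{m-1}. \]

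Next I would sum these local estimates over all hyperplanes. Each point of $\PG(m,q)$ lies on exactly $\frac{q^{m}-1}{q-1}$ hyperplanes, and there are $\frac{q^{m+1}-1}{q-1}$ hyperplanes in total, so, counting incidences with multiplicity,
\[ |\cM|\,\frac{q^{m}-1}{q-1}=\sum_{H}|\cM\cap H|\ge \frac{q^{m+1}-1}{q-1}\,\sigma_{m-1}, \]
and likewise $|\cS|\,\frac{q^{m}-1}{q-1}\ge \frac{q^{m+1}-1}{q-1}\,w_{m-1}$. Cancelling $\frac{1}{q-1}$ and using $q^{m+1}-1=q(q^{m}-1)+(q-1)\ge q(q^{m}-1)$ yields $|\cM|\ge q\sigma_{m-1}$ and $|\cS|\ge qw_{m-1}$; in fact this computation produces the slightly stronger factor $\frac{q^{m+1}-1}{q^{m}-1}$.

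I do not expect a serious obstacle: once the reduction to hyperplane sections is in place the argument is a clean averaging. The only place where the hypothesis is genuinely used is the non-emptiness of each $\cM\cap H$ — without the assumption that $\cM$ meets every hyperplane, a single hyperplane disjoint from $\cM$ would contribute a zero summand and ruin the uniform lower bound $\sigma_{m-1}$ (respectively $w_{m-1}$). Accordingly, the point to verify carefully is exactly that the hypothesis forces all $\frac{q^{m+1}-1}{q-1}$ summands to be at least the claimed minimum, and that each section really does inherit the $(0\bmod p)$ property; both are routine.
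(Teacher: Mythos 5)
Your proof is correct and follows essentially the same route as the paper's: the paper also double-counts point--hyperplane incidences, bounds each hyperplane section from below by $w_{m-1}$ (resp.\ $\sigma_{m-1}$) since the section is a non-empty $(0\bmod p)$-multiset of $\PG(m-1,q)$, and extracts the factor $\frac{q^{m+1}-1}{q^{m}-1}\geq q$. The only cosmetic difference is that the paper organizes the count via the intersection-number spectrum $n_1<\cdots<n_k$ with multiplicities $z_i$, whereas you sum the uniform bound directly; the content is identical.
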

	\begin{proof}
		Let $\cM=(\cS, \mu)$ be a $(0 \bmod p)$-multiset of $\PG(m,q)$ such that the hyperplanes of $\PG(m,q)$ meet $\cS$ in $0< n_1 < n_2 < \cdots < n_k$ points, and there are $z_{i}$ hyperplanes meeting $\cS$ in $n_i$ points. 
		Clearly 
		\begin{equation}
			\label{last1}
			z_{1} + z_{2} + \cdots + z_k= \frac{q^{m+1} - 1}{q - 1}. 
		\end{equation}
		By double counting the pairs $(x,\mathcal{H})$ where $x\in \cS$ and $\mathcal{H}$ is a hyperplane incident with $x$, we obtain:
		\begin{equation}
			\label{last2}
			n_1 z_{1} + n_2 z_{2} + \cdots + n_kz_k = \vert \cS \vert \frac{q^m - 1}{q - 1}.
		\end{equation}
		Multiplying \eqref{last1} by $n_1$ and subtracting \eqref{last2} we get
		\[
		\vert \cS \vert \geq n_1 \frac{q^{m+1} - 1}{q^m - 1} \geq n_1 q.
		\]
		Since $n_1$ is at least $w_{m-1}$ the first part of the statement is proved.
		To prove the second part, assume that hyperplanes meet $\cM$ in $0<n_1<\ldots<n_k$ points (counting the points with their multiplicities) and that there are $z_i$ hyperplanes meeting $\cM$ in $n_i$ points. Clearly \eqref{last1} holds and by double counting the $(x,\mathcal{H})$ pairs (counting each of them $\mu(x)$ times), we obtain \eqref{last2} with $|\cM|$ on its right side (instead of $|\cS|$). Similarly to the arguments from above, now we obtain  
		$\vert \cM \vert \geq n_1q$ and the result follows noting that $n_1 \geq \sigma_{m-1}$.
	\end{proof}
	
	We will need the following triviality.
	
	\begin{proposition}
		\label{lastproposition}
		The minimum weight $w_1$ of \,$\cC_{\PG}(1,q)^\perp$, $q=p^h$, is $2$ and  
		\[\sigma_1=\min  \left \{\sigma(\mathbf{s}) : \mathbf{s} \in \cC_{\PG}(1,q)^\perp \setminus  \{\boldsymbol{0}\} \right \}=p.\] 
	\end{proposition}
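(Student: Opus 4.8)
The plan is to handle the two claims separately, since both are essentially direct computations about the parity-check structure of codes over the projective line $\PG(1,q)$, whose $q+1$ points I will identify with $\F_q \cup \{\infty\}$. First I would recall that $\cC_{\PG}(1,q)$ is generated by the incidence vectors of the ``lines'' of $\PG(1,q)$, but in dimension one the only block is the whole line itself: every point lies on the unique line. Hence $\cC_{\PG}(1,q)$ is just the one-dimensional code spanned by the all-ones vector $\mathbf{1}$ of length $v = q+1$, and its dual $\cC_{\PG}(1,q)^\perp$ is exactly the hyperplane of vectors $\mathbf{s} \in \F_p^{q+1}$ whose coordinate sum vanishes, i.e.\ $\sigma(\mathbf{s}) \equiv 0 \pmod p$.

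With this description the weight claim $w_1 = 2$ is immediate: a nonzero codeword must have at least two nonzero coordinates, because a single nonzero entry cannot have coordinate-sum zero modulo $p$, and a vector with exactly two nonzero entries $a, -a$ (for any $a \in \F_p^*$) lies in the dual and has weight $2$. So $w_1 = 2$. For the second claim I would use the multiset interpretation: $\sigma_1$ is the minimum of $\sigma(\mathbf{s})$ over nonzero $\mathbf{s} \in \cC_{\PG}(1,q)^\perp$, where now $\sigma$ denotes the integer sum of the coordinates (each lifted to its residue in $\{0,1,\dots,p-1\}$, as fixed in Section~\ref{sec: main-result}). Since the multiplicities are taken in $\{1,\dots,p-1\}$ and the condition is $\sum_i s_i \equiv 0 \pmod p$ with the $s_i$ nonnegative and not all zero, the integer sum $\sigma(\mathbf{s})$ is a positive multiple of $p$, hence $\sigma(\mathbf{s}) \geq p$. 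Conversely the bound is attained, for instance by a single point of multiplicity $p$; but since multiplicities are constrained to lie in $\{1,\dots,p-1\}$, I would instead exhibit $p$ distinct points each of multiplicity $1$ (this uses $q+1 \geq p$, which holds as $q = p^h \geq p$), giving a $(0 \bmod p)$-multiset of size exactly $p$. Thus $\sigma_1 = p$.

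There is really no substantial obstacle here; the statement is flagged as ``the following triviality'' precisely because both parts reduce to the observation that $\cC_{\PG}(1,q)^\perp$ is the zero-sum hyperplane. The only point demanding a sentence of care is the distinction between the Hamming weight $w(\mathbf{s})$ (number of nonzero coordinates, governing $w_1$) and the coordinate-sum functional $\sigma(\mathbf{s})$ (governing $\sigma_1$); these coincide when all multiplicities equal $1$ but diverge otherwise, and the minimum of each is achieved by a different extremal configuration — weight $2$ by a $(a,-a)$ pattern on two points, and size $p$ by $p$ points of multiplicity one. Ensuring that the constraint $q > p-1$ leaves room for $p$ distinct points of multiplicity $1$ is the last thing I would verify, and it is automatic since $q \geq p$.
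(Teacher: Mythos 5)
Your proof is correct, and the paper in fact omits any proof of this proposition (it is introduced as ``the following triviality''); your identification of $\cC_{\PG}(1,q)^\perp$ with the zero-sum hyperplane of $\F_p^{q+1}$, from which $w_1=2$ via an $(a,-a)$ pattern and $\sigma_1=p$ via $p$ points of multiplicity one, is exactly the intended argument. One small remark: the $(a,-a)$ codeword already has $\sigma = a+(p-a) = p$, so a single example witnesses both extremal values simultaneously.
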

	
	
	In the next proof we will need the following definition.
	
	\begin{definition}
		For a multiset $\cM=(\cS,\mu)$ of $\Sigma$ we define $(p-1)\cM$ as the multiset $(\cS,\bar{\mu})$ of $\Sigma$, where
		for each $x\in \cS$, $\bar{\mu}(x) \in \{1,2,\ldots,p-1\}$ such that $\bar{\mu}(x) \equiv (p-1)\mu(x) \pmod p$. 
	\end{definition}
	
	\begin{theorem}
		\label{codebound2}
		If ${\bf s}\in \cC_{\PG}(m,q)^\perp$ and $q>p$, then
		\[ w({\bf s}) \geq 2(q^{m-1}(p-1)/p+q^{m-2}).\]
	\end{theorem}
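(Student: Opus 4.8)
The plan is to pass from the weight $w(\mathbf{s})=|\cS|$ to the sizes of two associated multisets, and then to a bound on the minimum size of a $(0\bmod p)$-multiset. Write $\cM=(\cS,\mu)$ for the multiset whose characteristic vector is $\mathbf s$, with $1\le\mu(x)\le p-1$. The multiset $(p-1)\cM=(\cS,\bar\mu)$ has the same support and satisfies $\mu(x)+\bar\mu(x)=p$ for every $x\in\cS$, since $\bar\mu(x)\equiv-\mu(x)\pmod p$ and both values lie in $\{1,\dots,p-1\}$. Summing over $\cS$ yields the identity
\[
|\cM|+|(p-1)\cM|=p\,|\cS|=p\,w(\mathbf s).
\]
Both $\cM$ and $(p-1)\cM$ are $(0\bmod p)$-multisets, so if $\sigma_m$ denotes the minimum size of a non-empty $(0\bmod p)$-multiset of $\PG(m,q)$, then $|\cM|,|(p-1)\cM|\ge\sigma_m$ and hence $w(\mathbf s)\ge 2\sigma_m/p$. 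Since the claimed bound equals $2\bigl((p-1)q^{m-1}+pq^{m-2}\bigr)/p$, it suffices to prove the size estimate $\sigma_m\ge (p-1)q^{m-1}+pq^{m-2}$ (for $p=2$ all multiplicities are $1$, $(p-1)\cM=\cM$, and the assertion is exactly the known sharp bound \eqref{Del}, so I may assume $p>2$).

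I would establish the size estimate by induction on $m$. For $m=1$, Proposition \ref{lastproposition} gives $\sigma_1=p$, and $p\ge(p-1)+p/q$ holds because $q>p$. For the inductive step let $\cM$ be a $(0\bmod p)$-multiset of $\PG(m,q)$. If $\cM$ meets every hyperplane, then Lemma \ref{lastlemma} gives $|\cM|\ge q\sigma_{m-1}$, and the inductive hypothesis $\sigma_{m-1}\ge(p-1)q^{m-2}+pq^{m-3}$ yields exactly $|\cM|\ge(p-1)q^{m-1}+pq^{m-2}$. Otherwise $\cM$ is disjoint from some hyperplane and so lives in an affine chart $\AG(m,q)$. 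In the affine situation there are two favourable cases. If $\cM$ has a point of multiplicity $1$, then Theorem \ref{th:lowerbound} applies directly and gives the required bound. If instead $\cM$ is disjoint from relatively few hyperplanes, a pencil argument works: choosing a disjoint hyperplane $H$ together with an $(m-2)$-flat $W\subset H$ contained in no other disjoint hyperplane, the $q$ remaining hyperplanes through $W$ each cut $\cM$ in a (possibly empty) $(0\bmod p)$-multiset of a $\PG(m-1,q)$, and summing over the pencil gives $|\cM|\ge q\sigma_{m-1}=(p-1)q^{m-1}+pq^{m-2}$. Such a flat $W$ exists whenever the number of hyperplanes disjoint from $\cM$ is at most $(q^m-1)/(q-1)$, since each further disjoint hyperplane forbids at most one of the $(q^m-1)/(q-1)$ choices of $W$ inside $H$.

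The main obstacle is the remaining case, which is genuinely present only for $p\ge 5$: an affine multiset $\cM$ with no point of multiplicity $1$ that is disjoint from more than $(q^m-1)/(q-1)$ hyperplanes. Here Theorem \ref{th:lowerbound} does not apply, the flat $W$ above need not exist, and the crude bound \eqref{no-1}, namely $|\cM|\ge 2q^{m-1}+2pq^{m-2}$, falls short of $(p-1)q^{m-1}+pq^{m-2}$ once $p\ge 5$. To treat it I would exploit the whole family of scalings: for $c\in\{1,\dots,p-1\}$ the multiset $c\cM$ has the same support and the same set of disjoint hyperplanes, and $\sum_{c=1}^{p-1}|c\cM|=\tfrac{p(p-1)}{2}|\cS|$. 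A scaling $c\cM$ acquires a multiplicity-$1$ point precisely when $c^{-1}$ is an attained multiplicity of $\cM$, so at least one scaling is covered by Theorem \ref{th:lowerbound}; the difficulty is to control those scalings that still have no multiplicity-$1$ point, for which one must combine partial pencil estimates with a finer count of the disjoint hyperplanes rather than the wasteful bound \eqref{no-1}. I expect this interplay between the number of disjoint hyperplanes and the set of attained multiplicities to be the technical heart of the argument, and the reason why the small characteristics $3$ and $5$ warrant separate attention.
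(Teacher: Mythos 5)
Your opening move is the same as the paper's: the identity $|\cM|+|(p-1)\cM|=p\,w(\mathbf s)$, justified exactly as you do via $\mu(x)+\bar\mu(x)=p$ on the common support. But you then reduce the theorem to the statement that \emph{every} non-empty $(0\bmod p)$-multiset of $\PG(m,q)$ has size at least $(p-1)q^{m-1}+pq^{m-2}$, i.e.\ to a lower bound on $\sigma_m$. This is a strictly stronger claim than the theorem requires, it is not proved anywhere in the paper for general $p$ (the paper only obtains it for $p=3$ and $p=5$, and for restricted multiplicity patterns, in Proposition \ref{t/p-t} and Proposition \ref{codebound3}, which come \emph{after} and partly rely on Theorem \ref{codebound2}), and you yourself concede that you cannot close the case of an affine multiset with no point of multiplicity $1$ when $p\ge 5$. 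That unresolved case is a genuine gap: the crude bound \eqref{no-1} is too weak there, and no amount of pencil counting over disjoint hyperplanes is supplied to replace it.

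The missing idea is to use the scaling freedom on $\mathbf s$ \emph{before} splitting into $\cM$ and $(p-1)\cM$, rather than afterwards. Since $w(\lambda\mathbf s)=w(\mathbf s)$ for $\lambda\in\F_p^*$, in the affine case you may assume from the outset that $\cM$ has a point $x$ of multiplicity $1$. Then only two cases remain. If no point of $\cM$ has multiplicity $p-1$, every line through $x$ must carry at least two further support points (one extra point could contribute at most $p-2$), so $w(\mathbf s)\ge 1+2\frac{q^m-1}{q-1}$, which already exceeds $2(q^{m-1}(p-1)/p+q^{m-2})$. If some point has multiplicity $p-1$, then $(p-1)\cM$ also has a point of multiplicity $1$, so Theorem \ref{th:lowerbound} applies to \emph{both} summands of your identity and gives $p\,w(\mathbf s)\ge 2((p-1)q^{m-1}+pq^{m-2})$. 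The troublesome ``no multiplicity-$1$ point'' multisets never arise. For the projective case the paper then inducts on the minimum \emph{weight} $w_{m-1}$ using the first inequality of Lemma \ref{lastlemma} ($|\cS|\ge qw_{m-1}$), not on the minimum multiset size $\sigma_{m-1}$; your base case $\sigma_1=p$ and your $q\sigma_{m-1}$ inequality are the right shape but attached to the wrong invariant. With these corrections your argument becomes the paper's proof; as written, it does not establish the theorem.
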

	\begin{proof}
		First, take a codeword ${ \bf s}\in \cC_{\AG}(m,q)^\perp$ and let $\cM$ be the multiset associated with ${ \bf s}$.
		Scaling the codeword ${ \bf s}$ does not alter its weight. Thus we may assume the existence of a point  $x \in \cM$ with multiplicity $1$. If there are no points with multiplicity $p-1$, then counting the points on lines through $x$ gives
		$$w({\bf s}) \geq 1+2(q^{m-1}+q^{m-2}+\ldots+q+1),$$
		and hence the assertion follows.
		If there is a point with multiplicity $p-1$, the multiset $(p-1)\cM$ has a point of multiplicity $1$. Since $|\cM|+|(p-1)\cM|$ is $p$ times the weight of $\bf{s}$, by \eqref{affine_bound} (applied twice) we obtain
		\begin{equation}
			\label{AB}
			2((p-1)q^{m-1}+pq^{m-2})\leq |\cM|+|(p-1)\cM|=p\,w({\bf s}),\end{equation}
		from which the result follows. 
		For the origin of the trick applied in \eqref{AB} see {\cite[Result 3.2 and Remark 3.4]{adriaensen}}.
		
		Now, let us assume ${ \bf s } \in \cC_{\PG}(m,q)^{\perp}$ and  let $\cM=(\cS, \mu)$ be the multiset associated with it. If there is a hyperplane of $\PG(m,q)$ disjoint from $\cM$, the result follows from the first part of the proof. Now, suppose that each hyperplane of $\PG(m,q)$ meets $\cM$ and hence $\cS$.
		Take first $m = 2$. Then, by Lemma \ref{lastlemma} and Proposition \ref{lastproposition},
		$$w({\bf s})= \vert \cS \vert \geq 2q \geq 2 (q(p-1)/p +1),$$ as required. Now suppose that $m > 2$ and, by the induction hypothesis, assume that the weight $w_{m-1}$ of each non-zero codeword of $\cC^\perp_{\PG}(m-1,q)$ is at least $2(q^{m-2}(p-1)/p+q^{m-3})$. Thus, by Lemma \ref{lastlemma}
		\[
		w( {\bf s}) = \vert \cS \vert  \geq 
		qw_{m-1}  \geq   2(q^{m-1}(p-1)/p+q^{m-2}),
		\]
		which completes the proof. 
	\end{proof}
	
	Let $\cC^{(k)}_{\PG}(m,q)$  be the $p$-ary code associated with the design $\mathcal{D}^{(k)}_{\PG}(m,q)$ of points and $k$-spaces  of $\PG(m,q)$, i.e. the $\F_p$-subspace generated by the incidence vectors of the blocks of the corresponding design. As said before, the minimum weight of $\mathcal{C}^{(k)}_{\mathrm{PG}}(m,q)^\perp$ is equal to the minimum weight of $\mathcal{C}_{\mathrm{PG}}(m-k+1)^\perp$, see \cite[Theorem 10]{LaStoVan}. Then, as a corollary of Theorem \ref{codebound2}, we get the following result.
	
	\begin{corollary}
		\label{improvesM}
		If $\boldsymbol{s} \in \cC^{(k)}_{\PG}(m,q)$ and $q >p$, then 
		\[ w({\bf s}) \geq 2(q^{m-k}(p-1)/p+q^{m-k-1}).\]  
	\end{corollary}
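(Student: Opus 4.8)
The plan is to derive this statement immediately from the dimension-reduction result of Lavrauw, Storme and Van de Voorde combined with the bound just established in Theorem~\ref{codebound2}. The object to be controlled is the minimum weight of the dual code $\cC^{(k)}_{\PG}(m,q)^\perp$, so the quantity $w(\mathbf{s})$ appearing in the statement is the weight of a nonzero codeword of that dual; everything reduces to bounding that minimum weight.

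First I would invoke \cite[Theorem 10]{LaStoVan}, which asserts that the minimum weight of $\cC^{(k)}_{\PG}(m,q)^\perp$ coincides with the minimum weight of $\cC_{\PG}(m-k+1,q)^\perp$. This is precisely the reduction already recalled in the surrounding text. Consequently it suffices to bound the minimum weight of the single-step code $\cC_{\PG}(m-k+1,q)^\perp$, and this is exactly the content of Theorem~\ref{codebound2}.

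Next I would apply Theorem~\ref{codebound2} with the ambient dimension replaced by $m':=m-k+1$. Since $q>p$ by hypothesis, the theorem yields, for every nonzero codeword $\mathbf{t}\in \cC_{\PG}(m',q)^\perp$,
\[
w(\mathbf{t}) \geq 2\left(q^{m'-1}\frac{p-1}{p}+q^{m'-2}\right).
\]
Substituting $m'-1=m-k$ and $m'-2=m-k-1$ turns the right-hand side into $2\left(q^{m-k}(p-1)/p+q^{m-k-1}\right)$, exactly the bound claimed in the corollary. Chaining this with the equality of minimum weights from the previous paragraph completes the argument.

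The only step requiring any care, and the closest thing to an obstacle, is the parameter bookkeeping: one needs $1\le k\le m-1$ so that $m'=m-k+1\ge 2$, which places us squarely in the regime handled by Theorem~\ref{codebound2} (whose inductive proof is anchored at $m'=2$), while the hypothesis $q>p$ transfers verbatim. No new estimate is introduced, since the lower bound is simply transported intact through the dimension reduction.
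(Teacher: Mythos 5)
Your proposal is correct and follows exactly the route the paper intends: the reduction of \cite[Theorem 10]{LaStoVan} identifies the minimum weight of $\cC^{(k)}_{\PG}(m,q)^\perp$ with that of $\cC_{\PG}(m-k+1,q)^\perp$, and Theorem~\ref{codebound2} applied in dimension $m'=m-k+1$ gives precisely the stated bound (you also correctly read the statement as referring to the dual code, which is evidently what is meant). Your remark on the range $1\le k\le m-1$ ensuring $m'\ge 2$ is a sensible bit of bookkeeping that the paper leaves implicit.
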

	We remark that Corollary \ref{improvesM} improves the bounds proved in  \cite[Theorems 14 and 15]{LaStoVan}.
	
	\medskip
	
	In the next propositions we will frequently use $\lambda {\bf s}$, where ${\bf s}=(s_1,\ldots,s_n)$ is a vector with coordinates in $\F_p$ and $\lambda$ is an integer. Of course, the $i$-the coordinate of $\lambda {\bf s}$ is defined to be the unique element of $\{0,1,\ldots,p-1\}$ congruent to $\lambda s_i$ modulo $p$.
	
	\begin{proposition}\label{t/p-t}
		Let ${\bf s} \in \cC_{\PG}(m,q)^\perp$ with $q=p^h$, $h>1$ and $p$ an odd prime power. Let $1 \leq t \leq  \frac{p-1}{2}$. If the non-zero coordinates of $\bf s$ are equal to $t$ or $p-t$, then
		\begin{equation*}
			\sigma({\bf s})\geq (p-1)(q^{m-1}+q^{m-2})+q^{m-2}.
		\end{equation*}
	\end{proposition}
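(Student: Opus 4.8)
The plan is to pass from ${\bf s}$ to its associated multiset $\cM=(\cS,\mu)$, so that $\sigma({\bf s})=|\cM|$ and every point of $\cS$ has multiplicity $t$ or $p-t$, and then to argue by induction on $m\geq 2$ that $|\cM|\geq B_m$, where I write $B_m:=(p-1)(q^{m-1}+q^{m-2})+q^{m-2}$. Since multiplying ${\bf s}$ by a nonzero scalar of $\F_p$ keeps it in $\cC_{\PG}(m,q)^\perp$ and does not change its support, I would work with the two scalings $\cM_1:=t^{-1}\cM$ and $\cM_2:=(p-t)^{-1}\cM$: the first has multiplicities $1$ and $p-1$ (a point of multiplicity $t$ becomes $1$, a point of multiplicity $p-t\equiv -t$ becomes $p-1$), while the second has multiplicities $p-1$ and $1$. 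Both are again $(0\bmod p)$-multisets. For each $m$ I distinguish the case where some hyperplane is disjoint from $\cM$ (the ``affine'' case) from the case where every hyperplane meets $\cM$.

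In the affine case I can apply Theorem \ref{th:lowerbound} to $\cM_1$ and to $\cM_2$, each of which lies in an affine chart and possesses a point of multiplicity $1$ (provided the relevant multiplicity class is nonempty). Writing $a$ for the number of points of multiplicity $t$ and $b$ for the number of multiplicity $p-t$, this yields the two inequalities $a+b(p-1)\geq B_m$ and $a(p-1)+b\geq B_m$. The crucial observation is that $\sigma({\bf s})=at+b(p-t)$ is exactly the convex combination
\[
at+b(p-t)=\alpha\bigl(a+b(p-1)\bigr)+\beta\bigl(a(p-1)+b\bigr),\qquad \alpha=\frac{p-1-t}{p-2},\ \beta=\frac{t-1}{p-2},
\]
where $\alpha,\beta\geq 0$ and $\alpha+\beta=1$ precisely because $1\leq t\leq p-1$ and $p>2$. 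Hence $\sigma({\bf s})\geq(\alpha+\beta)B_m=B_m$. The degenerate cases $a=0$ or $b=0$ are easier: then one of $\cM_1,\cM_2$ is an ordinary set of size $\geq B_m$ by Theorem \ref{th:lowerbound}, and $\sigma({\bf s})\geq|\cM_i|\geq B_m$ since $t,p-t\geq 1$.

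In the projective case every hyperplane meets $\cM$. The double counting in the proof of Lemma \ref{lastlemma} gives $|\cM|\geq q\,n_1$, where $n_1$ is the least number of points (counted with multiplicity) that $\cM$ cuts on a hyperplane. The hyperplane realising $n_1$ meets $\cM$ in a nonempty $(0\bmod p)$-multiset of $\PG(m-1,q)$ whose nonzero multiplicities are again $t$ or $p-t$. For $m\geq 3$ the induction hypothesis gives $n_1\geq B_{m-1}$, and since $q\,B_{m-1}=B_m$ I conclude $|\cM|\geq B_m$. For the base case $m=2$ the hyperplane is a line $\cong\PG(1,q)$, and Lemma \ref{lastlemma} together with Proposition \ref{lastproposition} yields $|\cM|\geq q\sigma_1=qp\geq B_2$, using $q>p$.

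I expect the main obstacle to be the affine case. A single scaling only produces a multiset with the extremal multiplicities $\{1,p-1\}$, and applying Theorem \ref{th:lowerbound} to one scaling alone does not control $|\cM|$ for intermediate $t$; the point is to use both extremal scalings $\cM_1,\cM_2$ simultaneously and to notice that $\sigma({\bf s})$ is their convex combination with the weights above. Checking $\alpha,\beta\geq 0$ for all admissible $t$ is what makes the bound come out sharp (here the hypothesis $t\leq(p-1)/2$ is merely a normalisation, since the multiplicity pair $\{t,p-t\}$ is symmetric). The remaining verifications are routine: that scaling preserves both membership in $\cC_{\PG}(m,q)^\perp$ and the affine/projective dichotomy, and that every hyperplane section inherits the multiplicity restriction and the $(0\bmod p)$ property, so that the induction closes.
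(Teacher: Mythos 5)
Your proof is correct and follows essentially the same route as the paper: scale by $t^{-1}$ and $(p-t)^{-1}$ to produce two multisets each containing a multiplicity-one point, apply Theorem \ref{th:lowerbound} to both in the affine case, and handle the projective case by induction through the hyperplane double count of Lemma \ref{lastlemma}. The only difference is cosmetic: where you exhibit $\sigma({\bf s})=\alpha\,\sigma(t^{-1}{\bf s})+\beta\,\sigma((p-t)^{-1}{\bf s})$ as an explicit convex combination, the paper reaches the equivalent conclusion $\sigma({\bf s})\geq\min\{\sigma(\lambda{\bf s}),\sigma(\mu{\bf s})\}$ by a short contradiction argument on the counts $x_t$ and $x_{p-t}$.
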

	\begin{proof}
		Firstly, we prove the result for a codeword in ${ \bf s} \in \cC_{\AG}(m,q)^{\perp}$. Let us suppose that ${ \bf s }$ has all non-zero  coordinates equal to $t>1$. Let $\lambda$ be an integer such that $\lambda t \equiv 1 \pmod p$, then $\sigma( { \bf s}) \geq \sigma(\lambda { \bf s} ) \geq (p-1)(q^{m-1}+q^{m-2})+q^{m-2}$ (by Theorem \ref{codebound}). A similar argument applies if all the non-zero coordinates of ${\bf s}$ are equal to $p-t$. If ${\bf s}$ has a coordinate equal to $1$, then the result follows again from Theorem \ref{codebound}. 
		If this is not the case, then denote by $x_i$ the number of coordinates in ${\bf s}$ equal to $i,$ with $i \in \{ t, p-t\}$. Then $\sigma ({\bf s})= t x_t + (p-t) x_{p-t}$. Let $\lambda,\mu$ be integers such that $\lambda t \equiv 1 \pmod p$ and $\mu t \equiv p-1 \pmod p$, respectively. Then we get
		\[\sigma(\lambda{\bf s})=x_t+(p-1)x_{p-t},\]
		\[\sigma(\mu {\bf s})=(p-1)x_t+x_{p-t}.\]
		If $\sigma ({\bf s}) \geq \sigma (\lambda {\bf s})$ or $\sigma ({\bf s}) \geq \sigma (\mu {\bf s})$, the result directly follows from Theorem \ref{codebound}. This will always occurs: indeed, if otherwise $\sigma ({\bf s}) < \sigma (\lambda {\bf s})$ and $\sigma ({\bf s}) < \sigma (\mu {\bf s})$, we have
		\[\sigma( {\bf s }) = tx_t+(p-t)x_{p-t}< x_t + (p-1)x_{p-t},\]
		\[\sigma( {\bf s }) = tx_t+(p-t)x_{p-t}< (p-1)x_t+x_{p-t},\]
		which leads to $x_{t} < x_{p-t} < x_t$, a contradiction.
		
		Now, let us assume ${ \bf s } \in \cC_{\PG}(m,q)^{\perp}$ and  let $\cM$ be the multiset associated with it. If there is a hyperplane of $\PG(m,q)$ disjoint from $\cM$, the result follows from the first part of the proof. Now, suppose that each hyperplane of $\PG(m,q)$ meet $\cM$.
		Take first $m = 2$, then  by Lemma \ref{lastlemma} and Proposition \ref{lastproposition},
		$$\sigma({\bf s})= \vert \cM \vert \geq p q \geq (p-1)(q+1)+1,$$ as required. Now suppose that $m > 2$ and, by induction hypothesis,  we may assume that for any non-zero codeword in $\cC^\perp_{\PG}(m-1,q)$ the sum of its coordinates is at least $(p-1)(q^{m-2}+q^{m-3})+q^{m-3}$.  Thus, by Lemma \ref{lastlemma}
		\[
		\sigma( {\bf s}) = \vert \cM \vert  \geq 
		q \sigma_{m-1}  \geq (p-1)(q^{m-1}+q^{m-2})+q^{m-2}  ,
		\]
		which completes the proof. 
	\end{proof}
	
	As a direct consequence of this result, we get the following.
	
	\begin{corollary}
		If ${\bf s}\in \cC_{\PG}(m,q)^\perp$ with  $q=3^h$ and $h>1$, then
		\begin{equation}
			\sigma({\bf s}) \geq 2q^{m-1}+3q^{m-2}.
		\end{equation}

	\end{corollary}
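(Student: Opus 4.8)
The plan is to derive this corollary directly from Proposition \ref{t/p-t}, since the characteristic-$3$ hypothesis forces every codeword into exactly the shape that proposition handles. First I would observe that $q=3^h$ means $p=3$, so the coordinates of any ${\bf s}\in\cC_{\PG}(m,q)^\perp$ lie in $\F_3=\{0,1,2\}$. Consequently every non-zero coordinate equals either $1$ or $2=p-1$. Taking $t=1$, the range condition $1\le t\le\frac{p-1}{2}=1$ is satisfied, and the non-zero coordinates of ${\bf s}$ are precisely $t$ and $p-t$; hence the hypotheses of Proposition \ref{t/p-t} hold with no further work.

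Applying that proposition then yields
\[\sigma({\bf s})\geq (p-1)(q^{m-1}+q^{m-2})+q^{m-2},\]
and the only remaining step is the arithmetic simplification with $p=3$:
\[(p-1)(q^{m-1}+q^{m-2})+q^{m-2}=2q^{m-1}+2q^{m-2}+q^{m-2}=2q^{m-1}+3q^{m-2},\]
which is exactly the claimed bound.

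There is essentially no obstacle here: the corollary is a specialization of Proposition \ref{t/p-t}, and the single point to verify is that in characteristic $3$ the two admissible residues $1$ and $p-1$ already exhaust all non-zero elements of $\F_3$. This is immediate because $\frac{p-1}{2}=1$ collapses the interval $[1,\frac{p-1}{2}]$ to the single value $t=1$, whose complement $p-t=2$ is the only other non-zero element. Thus no new construction, counting argument, or polynomial estimate is needed beyond invoking Proposition \ref{t/p-t} and substituting $p=3$.
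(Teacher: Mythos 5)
Your proposal is correct and matches the paper's intent exactly: the corollary is stated there as a direct consequence of Proposition \ref{t/p-t}, obtained precisely by noting that for $p=3$ every non-zero coordinate is $1=t$ or $2=p-t$ with $t=1$, and then substituting $p=3$ into the bound. Nothing further is needed.
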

	
	We conclude the section by showing the following achievement in which we slightly relax the hypothesis of Proposition \ref{t/p-t}, providing $p=5$. 
	
	
	
	%
	
	\begin{proposition}
		\label{codebound3}
		If ${\bf s}\in \cC_{\PG}(m,q)^\perp$ with  $q=5^h$ and $h>1$, then 
		\[\sigma({\bf s}) \geq 4q^{m-1}+5q^{m-2}.\]
	\end{proposition}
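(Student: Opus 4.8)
The plan is to first establish the bound for affine codewords ${\bf s}\in\cC_{\AG}(m,q)^\perp$ and then lift it to the projective setting by induction on $m$, exactly as in the proof of Proposition \ref{t/p-t}. The crucial observation is that for $p=5$ the target value $4q^{m-1}+5q^{m-2}$ is precisely $(p-1)(q^{m-1}+q^{m-2})+q^{m-2}$, the bound of Theorem \ref{codebound}. Consequently, once the affine case is settled, the projective case follows verbatim: if some hyperplane is disjoint from the associated multiset $\cM$ we reduce to the affine bound; otherwise we treat the base case $m=2$ via Lemma \ref{lastlemma} and Proposition \ref{lastproposition} (which give $|\cM|\geq q\,\sigma_1=5q\geq 4q+5$ since $q\geq 5$), and for $m>2$ the induction hypothesis yields $\sigma_{m-1}\geq 4q^{m-2}+5q^{m-3}$, whence $|\cM|\geq q\,\sigma_{m-1}\geq 4q^{m-1}+5q^{m-2}$.

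The heart of the matter is therefore the affine case, which I would attack using the scaling invariance of the code together with Theorem \ref{codebound}. For a nonzero ${\bf s}$ with entries in $\{1,2,3,4\}$, let $x_i$ be the number of coordinates equal to $i$, so that $\sigma(\lambda{\bf s})=\sum_{i}(\lambda i \bmod 5)\,x_i$ for each integer $\lambda$. Since $\lambda{\bf s}\in\cC_{\AG}(m,q)^\perp$ and $\lambda{\bf s}$ has a coordinate equal to $1$ precisely when $x_{\lambda^{-1}}>0$ (inverse modulo $5$), Theorem \ref{codebound} gives $\sigma(\lambda{\bf s})\geq 4q^{m-1}+5q^{m-2}$ for every such $\lambda$. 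Hence it suffices to exhibit one scalar $\nu\in\{1,2,3,4\}$ with $x_{\nu^{-1}}>0$ and $\sigma(\nu{\bf s})\leq\sigma({\bf s})$; then $\sigma({\bf s})\geq\sigma(\nu{\bf s})\geq 4q^{m-1}+5q^{m-2}$, as required.

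To produce such a $\nu$ I would argue by contradiction, assuming $\sigma(\nu{\bf s})>\sigma({\bf s})$ for every admissible $\nu$ (recall $\nu=1,2,3,4$ are admissible exactly when $x_1,x_3,x_2,x_4>0$, respectively). Taking $\nu=1$ forces $x_1=0$, since otherwise that choice is admissible and gives equality. With $x_1=0$ the relevant differences are $\sigma({\bf s})-\sigma(2{\bf s})=-2x_2+2x_3+x_4$, $\sigma({\bf s})-\sigma(3{\bf s})=x_2-x_3+2x_4$ and $\sigma({\bf s})-\sigma(4{\bf s})=-x_2+x_3+3x_4$, and each admissible strict inequality becomes a linear constraint on $(x_2,x_3,x_4)$. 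When $x_2,x_4>0$ the constraints from $\nu=3,4$ give simultaneously $x_3>x_2$ and $x_2>x_3$; the degenerate branches collapse to impossible inequalities such as $0>3x_4$ (when $x_2=0,x_4>0$) or $0>2x_3$ (when $x_2=x_4=0$, forcing $x_3>0$), so every branch is contradictory. Equivalently, one can argue constructively: entries confined to $\{2,3\}$ are covered by Proposition \ref{t/p-t} with $t=2$; the identity $2\sigma({\bf s})-\sigma(3{\bf s})-\sigma(4{\bf s})=5x_4$ (valid for $x_1=0$) settles $x_2,x_4>0$ through $\sigma({\bf s})\geq\min(\sigma(3{\bf s}),\sigma(4{\bf s}))$; and entries confined to $\{3,4\}$ are handled by $\sigma({\bf s})\geq\sigma(2{\bf s})$, since $\sigma({\bf s})-\sigma(2{\bf s})=2x_3+x_4\geq 0$ there.

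I expect the main obstacle to be exactly this combinatorial claim. In Proposition \ref{t/p-t} only the two residues $\{t,p-t\}$ occur, and a single comparison between $\lambda{\bf s}$ and $\mu{\bf s}$ suffices; here all four nonzero residues may appear simultaneously, so one must verify that no configuration of the counts $x_1,\dots,x_4$ can make every admissible rescaling strictly increase $\sigma$, which requires the finite case distinction above. The remaining ingredients—reducing to a coordinate equal to $1$ via Theorem \ref{codebound}, and the disjoint-hyperplane/induction mechanism for the projective case—are routine and mirror what is already done for Proposition \ref{t/p-t}.
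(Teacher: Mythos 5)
Your proposal is correct and follows essentially the same route as the paper: reduce to the affine case, use scaling together with Theorem \ref{codebound} once a coordinate equal to $1$ is produced, run a finite case analysis on the counts $x_2,x_3,x_4$ of the nonzero residues (with the residues-in-$\{2,3\}$ case delegated to Proposition \ref{t/p-t}), and lift to $\PG(m,q)$ via the disjoint-hyperplane dichotomy, Lemma \ref{lastlemma} and induction on $m$. The only differences are organizational (the paper splits into ``some coordinate is $1$ or $4$'' versus ``all coordinates in $\{2,3\}$'', and derives the contradictions by summing pairs of inequalities rather than via your identity $2\sigma(\mathbf{s})-\sigma(3\mathbf{s})-\sigma(4\mathbf{s})=5x_4$), so there is nothing substantive to add.
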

	\begin{proof}
		
		First consider the case ${ \bf s } \in \cC_{\AG}(m,q)^\perp$ and assume that ${\bf s}$ has a non-zero coordinate which is equal to $1$ or $4$.  If  ${\bf s}$ has a coordinate equal to $1$, then the result follows from Theorem \ref{codebound}. If this is not the case, then denote by $x_i$ the number of $i$ coordinates in ${\bf s}$ for $i=2,3,4$. Then 
		\[\sigma({\bf s})=2x_2+3x_3+4x_4,\]
		\[\sigma(2{\bf s})=4x_2+x_3+3x_4,\]
		\[\sigma(3{\bf s})=x_2+4x_3+2x_4,\]
		\[\sigma(4{\bf s})=3x_2+2x_3+x_4.\] 
		If $x_3\neq 0$ and  $\sigma(2{\bf s})$ is less than or equal to $\sigma({\bf s})$, then the assertion follows from Theorem \ref{codebound}. 
		The same holds when $x_2\neq 0$ and $\sigma(3{\bf s})$ is less than or equal to $\sigma({\bf s})$. Since $x_4\neq 0$, if $\sigma(4{\bf s})$ is less than or equal to $\sigma({\bf s})$, then we are done by Theorem \ref{codebound}. From now on, we assume the contrary.
		
		If $x_2=x_3=0$, then  $\sigma( {\bf s}) \geq \sigma (4{\bf s})$, contradicting 
		$\sigma(4 {\bf s})>\sigma({\bf s})$. If $x_2\neq 0$ and $\sigma(3{\bf s}) > \sigma({\bf s})$, then
		
		\[\sigma(3{\bf s})=x_2+4x_3+2x_4 > 2x_2+3x_3+4x_4,\]
		\[\sigma(4{\bf s})=3x_2+2x_3+x_4 > 2x_2+3x_3+4x_4,\]
		and summing these two inequalities yields a contradiction. It follows that $x_2=0$.
		
		If $x_2=0$, $x_3\neq 0$, then we may assume 
		\[\sigma(2{\bf s})=x_3+3x_4 > 3x_3+4x_4,\]
		\[\sigma(4{\bf s})=2x_3+x_4 > 3x_3+4x_4.\]
		Similarly, summing up these two inequalities gives a contradiction. 
		Then, for any ${ \bf s } \in \cC_{\AG}(m,q)^\perp$ with a non-zero coordinate equal to $1$ or $4$, $\sigma({\bf s}) \geq 4 q^{m-1}+5q^{m-2}$. 
		
		By applying induction as in Proposition \ref{t/p-t}, we obtain that the result holds for  codewords in $\cC_{\PG}(m,q)^\perp$ with a non-zero coordinate equal to $1$ or $4$.
		Now, let us suppose that the non-zero coordinates of ${ \bf s}$ are equal to $2$ or $3$. Then the result follows directly from Proposition \ref{t/p-t}.
	\end{proof}
	
	\section{Acknowledgment}
	This work was supported by the Italian National Group for Algebraic and Geometric Structures and their Applications (GNSAGA–INdAM). The first author gratefully acknowledges the hospitality of the University of Naples Federico II. The second author would like to express his gratitude for the hospitality received at E\"otv\"os Lor\'and University (ELTE), Budapest.

	\vspace{1cm}
	
	\noindent Giovanni Longobardi, Giuseppe Marino, Rocco Trombetti,\\
	Dipartimento di Matematica e Applicazioni “Renato Caccioppoli”\\
	Università degli Studi di Napoli Federico II,\\
	via Cintia, Monte S. Angelo I-80126 Napoli, Italy. \\
	email: \\
	\texttt{\{giovanni.longobardi, giuseppe.marino rtrombet\}@unina.it}
	
	\vspace{1.5cm}
	
	\noindent Bence Csajb\'{o}k,\\
	Department of Computer Science\\
	ELTE E\"otv\"os Lor\'and University\\
	H-1117 Budapest, P\'azm\'any P.\ stny.\ 1/C, Hungary.\\ E-mail: {\texttt{bence.csajbok@ttk.elte.hu}}
	

\end{document}